\documentclass{article}

\usepackage{amsmath,amssymb,amsthm}
\usepackage{enumitem}
\setlist[enumerate]{label=(\roman*), leftmargin=*}
\usepackage{dsfont}
\usepackage{xcolor}
\usepackage{natbib}
\usepackage{hyperref}

\numberwithin{equation}{section}
\setlist[enumerate]{leftmargin=*,label=(\roman*)}

\theoremstyle{plain}
\newtheorem{theorem}{Theorem}[section]
\newtheorem{proposition}[theorem]{Proposition}
\newtheorem{lemma}[theorem]{Lemma}
\newtheorem{corollary}[theorem]{Corollary}

\theoremstyle{remark}
{\theoremstyle{definition}\newtheorem{definition}[theorem]{Definition}}
\newtheorem{example}[theorem]{Example}

\newtheorem{remark}[theorem]{Remark}

\renewcommand{\epsilon}{\varepsilon}
\newcommand{\eps}{\varepsilon}
\renewcommand{\phi}{\varphi}

\newcommand{\cC}{\mathcal{C}}

\newcommand{\cF}{\mathcal{F}}

\newcommand{\cI}{\mathcal{I}}

\newcommand{\cL}{\mathcal{L}}
\newcommand{\cM}{\mathcal{M}}

\newcommand{\cP}{\mathcal{P}}

\newcommand{\N}{\mathbb{N}}

\newcommand{\R}{\mathbb{R}}

\renewcommand{\d}{{\rm d}}

\newcommand{\Rd}{{\R^d}}
\newcommand{\one}{\mathds{1}}

\DeclareMathOperator{\Divergence}{D}

\DeclareMathOperator{\ex}{ex}

\newcommand{\Div}[3]{\Divergence_{#1}{\!\left(\left. #2 \,\right\rVert #3 \right)}}

\title{Extreme points of sets of probability measures and $\phi$-divergences}
\author{Gerrit Bauch\thanks{Center for Mathematical Economics, Bielefeld University, PO Box 10 01 31, 33501 Bielefeld, Germany. Email: \href{mailto:gerrit.bauch@uni-bielefeld.de}{gerrit.bauch@uni-bielefeld.de}}, Max Nendel\thanks{Department of Statistics and Actuarial Science, University of Waterloo, 200 University Ave W, Waterloo, ON N2L 3G1, Canada. Email: \href{mailto:mnendel@uwaterloo.ca}{mnendel@uwaterloo.ca}}, Alessandro Sgarabottolo\thanks{Department of Mathematics, LMU M\"unchen, Theresienstr.\ 39, 80333 Munich, Germany. Email: \href{mailto:sgarabottolo@math.lmu.de}{sgarabottolo@math.lmu.de}}}
\date{\today}

\begin{document}

\maketitle

\begin{abstract}
    In this work, we prove several equivalent characterizations of the extreme points of convex sets of probability measures of the form $\cM=\cP\cap H$, where $\cP$ denotes the set of all probability measures on an arbitrary measurable space $(\Omega,\cF)$ and $H$ is an affine set of signed measures on $(\Omega,\cF)$ with finite variation.\ 
    We first give a precise measure-theoretic formulation of the heuristic that extreme measures have minimal support.
    We then connect this with the notion of minimality with respect to absolute continuity, and prove that points that dominate no other element of $\cM$ are the only ones realizing the blow-up of a certain divergence map for any suitable $\phi$-divergence.\ Finally, considering a different class of $\phi$-divergences, we recover a characterization of the extreme points of $\cM$ as strict local maximizers of $\phi$-divergences relative to any suitable dominating measures.\
    We apply our result to recover and complement results from the literature in the context of finite spaces, sets of measures defined by integral constraints, multi-marginal couplings, and dominated sets of probability measures.\smallskip\\
    \noindent \textit{Keywords:}\ Extreme point, convex set of probability measures, absolute continuity, $\phi$-divergence, multi-marginal coupling, moment constraint, optimal transport.\smallskip\\
    \noindent \textit{AMS 2020 Subject Classification:}\ Primary:\
    60A10;
    46A55;
    Secondary:\
    94A17;
    28A35;
    49Q22.
\end{abstract}

\section{Introduction}

Extreme points encode the irreducible geometry of a convex set.\ A well-known fact is that an element is extreme if and only if it cannot be moved in two opposite feasible directions and remain in the set.\ For sets of probability measures, this geometric condition has traditionally been studied in relation to the support of the measure.\ In finite dimensions, the vertices of polyhedra of the form $\{x\in\R^n\mid x\geq 0,\ Ax=b\}$ are described by their active coordinates and a linear-independence condition, see, for example, \citet[Chapter~8]{schrijver1986theory}.\ Analogous principles based on minimality of the support underlie sets of probability measures satisfying finitely many integral constraints.\ These were studied by \citet{karr1983extreme} and \citet{winkler1988extreme} building on the functional-analytic characterizations of \citet{douglas1964extremal,douglas1966extremal-II}.\ Moreover, in the case of a finite state space, general discrete marginal problems can be analyzed through the combinatorics of their supports, see \citet{denny1980support}.\ Related to marginal constraints and optimal transport on general measurable spaces, graph structures, acyclicity, and uniqueness of a coupling on its support have played a central role since \citet{letac1966representation}, see also \citet{benes1987support,hestir1995supports,moameni2016supports,dallaglio1991advances,Villani2009OptimalTransport}.

These results motivate the informal principle that an extreme measure should have ``minimal support''.\ The difficulty is to formulate this principle in a way that is both intrinsic and valid beyond finite or purely atomic spaces.\ 

A strong formulation might say that an extreme point $p$ admits no distinct feasible measure $p'$ with $p'\ll p$. In the setting of doubly stochastic measures, this conjecture was attributed to Feldman, cf.\ \citet[p.\ 194]{dallaglio1991advances}, and has been proven to be false.\ For example, \citet{losert1982counterexamples} constructs an extreme doubly stochastic measure that dominates a distinct doubly stochastic measure.\ Thus extremality and minimality under absolute continuity need not coincide, even with the additional structure provided by marginal constraints.

Needless to say, characterizing the extreme points of a set of probability measures is naturally appealing. For example, whenever a convex set is compact in a suitable topology, Bauer's maximum principle, cf.\ \citet[Theorem 7.69]{AliprantisBorder2006infinite}, ensures that a continuous linear functional attains both its minimum and its maximum at an extreme point.
More generally, whenever the integral representation property holds, every measure can be written via a barycentric formula over the set of extremal measures, cf.\ \cite{Weizsacker1979IntegralRepresentation} for a detailed discussion.\
Many optimization problems can therefore be reduced to the, typically much smaller, set of extreme points.
More recently, the fast development of Distributionally Robust Optimization has brought renewed interest in this topic, in particular in the context of nonparametric sets of measures defined by optimal transport distances, see, for instance, \cite{GaoKleywegt2023distributionally,EsfahaniKuhn2018data,Wozabal2012framework,ZhaoGuan2018datadriven}. For example, \cite{Wozabal2012framework} shows that the extreme points of a Wasserstein ball around a measure concentrated on $n$ atoms are concentrated on at most $n+3$ atoms. This result was refined by \cite{OwhadiScovel2017extremepoints} under additional conditions, and the atomic structure of the extreme points of a Wasserstein ball around an empirical measure was extensively used by \cite{EsfahaniKuhn2018data}.

In this paper, we study extremality for general sets of the form $\cM=\cP\cap H$, where $\cP=\cP(\Omega)$ denotes the set of all probability measures on a  measurable space $(\Omega,\cF)$ and  $H$ is an affine set of signed measures on $(\Omega,\cF)$ with finite variation.\ Our main result, Theorem~\ref{thm:extreme:divergence}, provides several equivalent descriptions for extreme points of the set $\cM$.\ The second item in Theorem~\ref{thm:extreme:divergence} identifies the weakening of minimality under absolute continuity that restores an equivalence with extremality.\
That is, we prove that $p\in\cM$ is extreme if and only if there is no $p'\in\cM\setminus\{p\}$ such that $p'\ll p$ and $\d p'/\d p$ is essentially bounded.\ The boundedness requirement is both natural and sharp, as we show in Example \ref{ex:extreme:not:minimal}.\ Intuitively, a bounded density allows one to move a positive distance from $p$ in the direction $p-p'$ while remaining non-negative, and every non-trivial convex decomposition of $p$ automatically produces a bounded density.\ This criterion shows why the finite and infinite cases differ, namely, on a finite space every density is bounded, so extremality is equivalent to minimality under absolute continuity.

Another main theoretical contribution establishes a connection between extremality and $\varphi$-divergences, introduced in the seminal papers by \citet{zbMATH03322635,zbMATH03252891,zbMATH03252892}.\ We start by observing that all $\ll$-minimal elements of a set $\cM = \cP \cap H$ are extremal and admit a characterization through the divergence associated with a convex function $\phi$ with $\phi(1)=0$ and $\phi'_+(0)=-\infty$, see Theorem~\ref{thm:minimal:divergence}.\ In particular, if $p$ is $\ll$-minimal, for every $p' \in \cM\setminus\{p\}$ and every common dominating measure $q \in \cM$, $\lambda = 1$ is a strict local maximizer of the map
\begin{equation}\label{eq:intro:divergence}
\lambda \mapsto \Div{\phi}{\lambda p +(1-\lambda)p'}{q},
\end{equation}
where, for $p,q \in \cP$, $\Div{\phi}{p}{q}$ is the $\phi$-divergence of $p$ relative to $q$.\
Although $\ll$-minimal elements of $\cM$ do not exhaust the sets of extreme points of $\cM$, the last item of Theorem \ref{thm:extreme:divergence} recovers such a characterization upon considering a different class of $\phi$-divergences and restricting to suitable admissible dominating measures $q \in \cM$.

We illustrate our results in several settings.\ On finite sets $\Omega$, where extremality and $\ll$-minimality agree, the characterization of extreme points by the strict local maximization of the map \eqref{eq:intro:divergence} holds for any divergence with infinite negative slope at zero and any common dominating reference measure, thus extending the divergence criterion of \citet{bauch2025correlationuncertaintydecisiontheoreticapproach} beyond the reference independent coupling and beyond marginal constraints.
For finite integral constraints, the bounded-perturbation criterion directly implies the familiar support bound of at most $n+1$ atoms and the associated linear-independence condition from \citet{karr1983extreme,winkler1988extreme}.
Moreover, we obtain the equivalence with $\ll$-minimality and, therefore, an alternative characterization via $\phi$-divergences. For couplings on general spaces, concentration on a marginal uniqueness set implies $\ll$-minimality. This covers Monge couplings, quadratic optimal transport plans obtained by Brenier's theorem \citep{brenier1991polar}, the strictly convex displacement costs studied by \citet{gangbo1996geometry}, and one-dimensional monotone couplings, see, e.g., \citet[Lecture 4.6]{ambrosio2021lectures}.\
Finally, we describe the extreme points of dominated families of measures and explain the corresponding implication for sets of one-period equivalent martingale measures.

The rest of the paper is organized as follows.\ Section~\ref{sec:extreme} proves the general characterizations in terms of bounded densities and divergences, and Section~\ref{sec:applications} applies them to finite spaces, integral constraints sets, multi-marginal couplings, and dominated families of probability measures.

\section{Setup and main results} \label{sec:extreme}
Throughout, let $(\Omega, \cF)$ be a measurable space, $\cP = \cP(\Omega)$ denote the set of probability measures on $\Omega$, and $\cM = \cP \cap H$ be the nonempty intersection of $\cP$ with an affine space $H$ of signed measures on $\Omega$ with finite total variation, cf.\ \citet[Chapter 6, pp.\ 116--119]{rudin1987real}, \citet[Chapter III.1, pp.\ 95--99]{dunfordschwartz1988}.
For a convex set of probability measures $\cC$, we write $\ex \cC$ for the set of its extreme points, i.e., those points in $\cC$ that cannot be written as a proper convex combination of two distinct points in $\cC$. More precisely, if $p \in \ex \cC$ and $p = \lambda p' + (1-\lambda)p''$ with $\lambda \in (0,1)$, then $p'=p''=p$.

As usual, we write $p\ll q$ for two probability measures $p,q\in \cP$ if $q(A)=0$ implies $p(A)=0$ for all $A\in \cF$.\ In this case, we write $\frac{\d p}{\d q}$
for the $q$-a.s.\ unique Radon-Nikod\'ym derivative or density $\Omega\to [0,\infty)$ of $p$ with respect to $q$.\ For two probability measures $p,q\in \cP$, we also write $p \approx q$ if $p$ and $q$ are equivalent, that is $p \ll q$ and $q \ll p$.

A recurring heuristic in the study of extreme points of sets of probability measures is that they should have a minimal support.\ Motivated by this idea, we define the notion of $\ll$-minimality.

\begin{definition}\label{def:minimal}
    An element $p\in\cM$ is called \emph{$\ll$-minimal in $\cM$} if there exists no $p'\in\cM\setminus\{p\}$ with $p'\ll p$.
\end{definition}

Despite the intuitive connection between $\ll$-minimality and extremality, we observe that an equivalence does not hold in general, as the following example shows.
\begin{example}\label{ex:extreme:not:minimal}
    Take $\Omega=[0,1]$ with the Borel $\sigma$-algebra, and denote by $\mu$ the Lebesgue measure on $[0,1]$.\ For $t\in\R$, let $p_t$ be the finite signed measure with Lebesgue density $1+t(2x-1)$, and let $H:=\{p_t\,\vert\,t\in\R\}$.\ Since $1+t(2x-1)\geq 0$ for $\mu$-a.e.\ $x\in[0,1]$ if and only if $t\in[-1,1]$, we have
    \[
    \cM=\cP\cap H=\big\{p_t\,\vert\,t\in[-1,1]\big\}.
    \]
    Let $p:=p_1$ or $p:=p_{-1}$ with density $2x$ or $2(1-x)$, respectively.\ Being an endpoint of the segment $\cM$, $p$ is an extreme point of $\cM$.\ However, $p$ is not $\ll$-minimal in $\cM$. As a matter of fact, since the density $1+t(2x-1)$ is $\mu$-a.s.\ strictly positive for every $t\in[-1,1]$, all elements of $\cM$ are equivalent to $\mu=p_0$, and in particular $p_t\ll p$ for all $t\in[-1,1]$.
\end{example}

The following proposition identifies the exact gap between the two notions of extremality and $\ll$-minimality.\ While, for $\cM=H\cap \cP$, $\ll$-minimality implies extremality, extremality only excludes dominated measures with essentially bounded density.

\begin{proposition}\label{prop:extreme:minimal}
   For an element $p \in \cM$, the following statements are equivalent.
    \begin{enumerate}
        \item $p$ is an extreme point of $\cM$.\label{it:prop:extreme}
        \item \label{it:prop:bounded} There exists no $p' \in \cM\setminus \{p\}$ with $p'\ll p$ and $p$-a.s.\ bounded density $\frac{\mathrm{d}p'}{\mathrm{d}p}$.
    \end{enumerate}
\end{proposition}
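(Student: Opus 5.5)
We prove both implications by contraposition. In each direction a failure of one property produces a failure of the other.

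\emph{(i) $\Rightarrow$ (ii).} Suppose there is $p'\in\cM\setminus\{p\}$ with $p'\ll p$ and $\frac{\d p'}{\d p}\le C$ $p$-a.s.\ for some constant $C\ge 1$; note $C\ge1$ is automatic, since the density integrates to one with respect to $p$. The idea is to move from $p$ in the direction $p-p'$. For $\eps>0$ we set
\[
p'':=(1+\eps)p-\eps p'.
\]
First, $p''$ lies in $H$, because it is an affine combination (coefficients summing to one) of elements of the affine space $H$. Second, $p''(\Omega)=1$. Third, its density with respect to $p$ is $1+\eps-\eps\frac{\d p'}{\d p}\ge 1+\eps-\eps C$. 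This is non-negative as soon as $\eps\le 1/(C-1)$, or for every $\eps>0$ if $C=1$. Fixing such an $\eps$ gives $p''\in\cP\cap H=\cM$.

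Moreover $p''\neq p$, since $p'\neq p$. Solving for $p$ gives
\[
p=\tfrac{1}{1+\eps}p''+\tfrac{\eps}{1+\eps}p',
\]
a proper convex combination of two points of $\cM$ different from $p$. Hence $p$ is not extreme.

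\emph{(ii) $\Rightarrow$ (i).} Suppose $p$ is not extreme, so $p=\lambda p'+(1-\lambda)p''$ with $\lambda\in(0,1)$ and $p',p''\in\cM$ not both equal to $p$. Then $p'\neq p$, since otherwise $p''=p$ as well. For any $A\in\cF$ we have
\[
\lambda p'(A)\le \lambda p'(A)+(1-\lambda)p''(A)=p(A),
\]
using that $p''$ is non-negative. So $p'\ll p$, and by Radon--Nikod\'ym the density satisfies $\frac{\d p'}{\d p}\le 1/\lambda$ $p$-a.s. This contradicts (ii).

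The only point needing care is the bound on the density in the last step. It follows from the standard fact that $p'(A)\le p(A)/\lambda$ for all $A\in\cF$ forces $\frac{\d p'}{\d p}\le 1/\lambda$ a.s. To see this, apply the inequality to the set $A=\{\frac{\d p'}{\d p}>1/\lambda\}$: integrating the density over $A$ gives $p'(A)>p(A)/\lambda$ unless $p(A)=0$, so $p(A)=0$.

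There is no real obstacle otherwise. The one thing to check in the first direction is that $H$ being affine, and not merely convex, is exactly what allows the extrapolation beyond $p$.
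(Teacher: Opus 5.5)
Your proof is correct and takes essentially the same route as the paper. In one direction you bound $\frac{\d p'}{\d p}\le 1/\lambda$ from the convex decomposition; in the other you extrapolate past $p$ in the direction $p-p'$, using that $H$ is affine. The only cosmetic difference is that the paper takes $\eps=1/C$ and writes $p$ as the midpoint of $p\pm\eps(p'-p)$, whereas you use the asymmetric decomposition $p=\frac{1}{1+\eps}p''+\frac{\eps}{1+\eps}p'$.
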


\begin{proof}
\ref{it:prop:bounded} $\Rightarrow$ \ref{it:prop:extreme}:\
    If $p$ is not an extreme point of $\cM$, there exist $p',p''\in \cM$ with $p'\neq p$ and $\lambda \in (0,1)$ with $p = \lambda p' + (1-\lambda)p''$. Evidently, $p'$ and $p''$ are both absolutely continuous w.r.t.\ $p$. To conclude, note that
    \begin{equation*}
        \frac{\mathrm{d}p'}{\mathrm{d}p} = \frac{1}{\lambda} - \frac{1-\lambda}{\lambda} \frac{\mathrm{d}p''}{\mathrm{d}p} \leq \frac{1}{\lambda} \quad p\text{-a.s.}
    \end{equation*}

\noindent
\ref{it:prop:extreme}$\,\Rightarrow\,$\ref{it:prop:bounded}:\
    Let $p'\in \cM\setminus\{p\}$ with $p'\ll p$ and $\frac{\d p'}{\d p}\leq C$ $p$-a.s.\ for some constant $C\geq 1$, and set $\eps:=\frac1C$.\ Since $\cM$ is convex, $p+\eps(p'-p)=(1-\eps)p+\eps p'\in\cM$.\ Moreover, for any $A\in\cF$,
    \[
    \big(p-\eps(p'-p)\big)(A)=\int_A (1+\eps)-\eps\frac{\d p'}{\d p}\,\d p\geq \big(1+\eps-\eps C\big)\,p(A)=\eps\, p(A)\geq 0,
    \]
    which shows that $p-\eps(p'-p)\in\cP$ and therefore $p-\eps(p'-p)\in\cM$.\ Since
    $p=\frac12\big(p+\eps(p'-p)\big)+\frac12\big(p-\eps(p'-p)\big)$
    and $p'\neq p$, it follows that $p$ is not an extreme point of $\cM$.
\end{proof}

Even though extremality cannot be fully characterized through $\ll$-minimality, $\ll$-minimal elements represent a special class of extreme points which allow a characterization in terms of suitable divergences.
Recall that, for a convex function $\varphi \colon [0,\infty) \to \R$ with $\varphi(1)=0$ and two probability measures $p,q \in \cP$ with $p \ll q$, the $\phi$-divergence of $p$ with respect to $q$, introduced by \citet{zbMATH03322635,zbMATH03252891,zbMATH03252892}, is defined as
    \begin{equation} \label{eq:divergence}
        \Div{\varphi}{p}{q} := \int_{\Omega} \varphi \left( \frac{\d p}{\d q} \right) \, \d q.
    \end{equation}
The convexity of $\varphi$ together with $\varphi(1)=0$ and Jensen's inequality implies that
\[
\Div{\varphi}{p}{q}\geq \varphi(1)=0.
\]
If $\varphi$ is strictly convex, then $\Div{\varphi}{p}{q}=0$ if and only if $p=q$.\
Moreover, convexity of $\varphi$ also implies that the difference quotient
\[
[-t,\infty)\setminus\{0\}\to \R,\quad s\mapsto\frac{\varphi(t+s)-\varphi(t)}{s}
\]
is non-decreasing for all $t > 0$, so that the right and left derivatives
\[
\varphi'_+(t):=\lim_{s\downarrow 0}\frac{\varphi(t+s)-\varphi(t)}{s}\quad \text{and}\quad \varphi'_-(t):=\lim_{s\uparrow 0}\frac{\varphi(t+s)-\varphi(t)}{s}
\]
exist in $\R$ for all $t>0$. On $(0,\infty)$, both are non-decreasing and satisfy $$\varphi_+'(0)\leq \varphi'_-(t)\leq\varphi'_+(t)\quad \text{for all }t>0,$$ where
\[
 \varphi'_+(0):=\lim_{s\downarrow 0}\frac{\varphi(s)-\varphi(0)}{s}\in [-\infty,\infty).
\]
Many familiar measures of discrepancy arise from particular choices of $\varphi$. For example,
\[
\varphi(t)=t\log t,\quad \text{for }t> 0,
\]
together with $\varphi(0)=0$, gives the Kullback--Leibler divergence,
\[
\Div{{\rm KL}}{p}{q}
=\int_{\Omega} \log\bigg(\frac{{\rm d}p}{{\rm d}q}\bigg)\,{\rm d}p,
\]
while
\[
\varphi(t)=\frac{1}{2}|t-1|,\quad \text{for }t\geq 0,
\]
gives the total variation distance. The choice
\begin{equation}\label{eq:hellinger}
\varphi(t)=\frac12\big(\sqrt{t}-1\big)^2,\quad \text{for }t\geq 0,
\end{equation}
produces the squared Hellinger distance, and
\[
\varphi(t)=(t-1)^2,\quad \text{for }t\geq 0,
\]
leads to the Pearson $\chi^2$-divergence.\ 

We start with the following auxiliary result which we will use extensively in the subsequent discussion.
\begin{lemma} \label{lem:derivative:div}
   Let $p,p',q\in\cP$ with $p\ll q$ and $p'\ll q$, and let $\varphi\colon[0,\infty)\to\R$ be a convex function satisfying
    \begin{equation}\label{eq:phi:integrable}
        \varphi(1)=0,\quad \Div{\varphi}{p}{q}<\infty,\quad\text{and}\quad \Div{\varphi}{p'}{q}<\infty.
    \end{equation}
    Then, the following statements hold.
    \begin{enumerate}
        \item The functions $\varphi\big(\frac{\d p}{\d q}\big)$ and $\varphi\big(\frac{\d p'}{\d q}\big)$ are $q$-integrable on every $A\in\cF$.\label{it:lem:integrable}
        \item Define
        \begin{equation} \label{eq:lem:g}
        g:=\begin{cases}
        \varphi'_+\big(\tfrac{\d p}{\d q}\big)\big(\tfrac{\d p'}{\d q}-\tfrac{\d p}{\d q}\big) & \text{if } \tfrac{\d p'}{\d q}\geq \tfrac{\d p}{\d q},\\[0.5em]
        \varphi'_-\big(\tfrac{\d p}{\d q}\big)\big(\tfrac{\d p'}{\d q}-\tfrac{\d p}{\d q}\big) & \text{otherwise,}
        \end{cases}
        \end{equation}
        with the convention $(-\infty)\cdot 0:=0$.\ Then, $g$ is $q$-a.s.\ well-defined with values in $[-\infty,\infty)$ and, for every $A\in\cF$,
        \begin{equation}\label{eq:lem:derivative}
        \lim_{\lambda\uparrow 1}\int_A  \frac{\varphi\big(\lambda \frac{\d p}{\d q} + ( 1- \lambda) \frac{\d p'}{\d q}\big)-\varphi\big(\frac{\d p}{\d q}\big)}{1-\lambda}\,\d q=\int_A g\,\d q\in[-\infty,\infty).
        \end{equation}
        In particular, choosing $A=\Omega$,
        \[
       \lim_{\lambda\uparrow 1}\frac{\Div{\varphi}{\lambda p+(1-\lambda)p'}{q}-\Div{\varphi}{p}{q}}{1-\lambda}=\int_\Omega g\,\d q.
        \]\label{it:lem:derivative}
    \end{enumerate}
\end{lemma}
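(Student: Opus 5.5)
Write $f:=\frac{\d p}{\d q}$, $f':=\frac{\d p'}{\d q}$ and $f_\lambda:=\lambda f+(1-\lambda)f'$. The plan is to prove \ref{it:lem:integrable} from a lower affine bound on $\varphi$, and to get \eqref{eq:lem:derivative} from the monotonicity of convex difference quotients plus monotone convergence.

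For \ref{it:lem:integrable}, note that convexity gives an affine minorant $\varphi(t)\geq a+bt$ for $t\geq 0$; take the supporting line at $t=1$, with $b=\varphi'_+(1)$. Since $f$ and $f'$ are $q$-integrable, the negative parts $\varphi(f)^-$ and $\varphi(f')^-$ are $q$-integrable. Together with $\int_\Omega\varphi(f)\,\d q<\infty$ this forces the positive parts to be integrable too. Hence $\varphi(f)$ and $\varphi(f')$ lie in $L^1(q)$ and are integrable on every $A\in\cF$.

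For \ref{it:lem:derivative}, we first check that $g$ is well defined. On $\{f>0\}$ the one-sided derivatives are finite, so $g$ is real. On $\{f=0\}$ we have $f'\geq f$, so the first case applies with $\varphi'_+(0)\in[-\infty,\infty)$. When $f'=0$ the convention $(-\infty)\cdot 0=0$ gives $g=0$, and otherwise $g\in[-\infty,\infty)$.

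Set $s:=1-\lambda$. The pointwise quotient
\[
h_s:=\frac{\varphi(f+s(f'-f))-\varphi(f)}{s}
\]
is, for fixed $\omega$, the difference quotient of the convex function $u\mapsto\varphi(f+u(f'-f))$ at $u=0$. It is therefore non-decreasing in $s\in(0,1]$. As $s\downarrow 0$ it decreases to the directional derivative, which equals $g$ pointwise: use $\varphi'_+(f)$ in direction $f'-f\geq0$, and $\varphi'_-(f)$ in the negative direction, where $f>0$ so this is finite. This also covers $f=0$, and when $f'=f$ the quotient is $0$.

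The upper bound is $h_s\leq h_1=\varphi(f')-\varphi(f)$, which is $q$-integrable by \ref{it:lem:integrable}. Apply monotone convergence to the decreasing family $h_1-h_s\geq 0$, which increases to $h_1-g$. This gives $\int_A h_s\,\d q\downarrow\int_A g\,\d q\in[-\infty,\infty)$ for every $A$, and $\int_A g\,\d q<\infty$ because $g\leq h_1$. The statement for $A=\Omega$ then follows directly, since $\Div{\varphi}{f_\lambda q}{q}$ is finite by convexity and $\int h_s\,\d q$ is exactly the divergence quotient.

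The main obstacle is the careful handling of the boundary set $\{f=0\}$, where $\varphi'_+(0)$ may be $-\infty$. Monotonicity handles this: the monotone limit is allowed to be $-\infty$, so no lower integrable bound is needed. It remains to verify the pointwise convergence $h_s\to g$ there. When $f'>0$, $h_s=(\varphi(sf')-\varphi(0))/s\to\varphi'_+(0)f'$, which matches the definition of $g$ including the value $-\infty$.
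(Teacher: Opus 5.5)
Your proof is correct and follows essentially the same route as the paper. Part (i) uses the supporting line at $t=1$. Part (ii) uses the monotonicity of convex difference quotients, the integrable upper bound $h_1=\varphi(f')-\varphi(f)$, and monotone convergence applied to $h_1-h_s$, with your explicit treatment of $\{f=0\}$ and of the finiteness of the mixture divergence spelling out details the paper leaves implicit.
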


\begin{proof}
    \ref{it:lem:integrable}:\ Convexity of $\phi$ and $\phi(1)=0$ give $\varphi\big(\frac{\d p'}{\d q}\big)\geq\varphi'_+(1)\big(\frac{\d p'}{\d q}-1\big)$ $q$-a.s., where the right-hand side is $q$-integrable.\ Hence, $\Div{\varphi}{p'}{q}<\infty$ implies that $\varphi\big(\frac{\d p'}{\d q}\big)$ is $q$-integrable on every $A \in \cF$.\ The same argument applies to $p$.

    \noindent\ref{it:lem:derivative}:\ For $h\in(0,1]$, consider the difference quotient
    \[
    g_h:=\frac{\varphi\Big(\frac{\d p}{\d q}+h\big(\frac{\d p'}{\d q}-\frac{\d p}{\d q}\big)\Big)-\varphi\big(\frac{\d p}{\d q}\big)}{h}.
    \]
    Setting $h:=1-\lambda$ with $\lambda\in [0,1)$, the integrand in \eqref{eq:lem:derivative} equals $g_{1-\lambda}$.\ By convexity of $\varphi$, the map $h\mapsto g_h$ is non-decreasing and, therefore, $\lim_{h \downarrow 0} g_h = g$ $q$-a.s.\ Moreover,
    \[
    g_h\leq g_1=\varphi\bigg(\frac{\d p'}{\d q}\bigg)-\varphi\bigg(\frac{\d p}{\d q}\bigg),
    \]
    which is $q$-integrable by \ref{it:lem:integrable}.\ Applying monotone convergence to the non-negative family $(g_1-g_h)_{h\in(0,1]}$, which is non-decreasing as $h\downarrow 0$, gives
    \[
    \lim_{h\downarrow 0}\int_A(g_1-g_h)\,\d q=\int_A(g_1-g)\,\d q.
    \]
    After subtracting $\int_Ag_1\,\d q$, which is finite by \ref{it:lem:integrable}, we obtain \eqref{eq:lem:derivative}.
\end{proof}
The following theorem shows that $\ll$-minimality is equivalent to the blow-up of a divergence map along feasible segments, for every divergence associated with a convex function $\varphi$ with infinite negative slope at zero and with respect to every dominating measure.

\begin{theorem}\label{thm:minimal:divergence}
    For $p\in \cM$, the following statements are equivalent.
    \begin{enumerate}
        \item $p$ is $\ll$-minimal in $\cM$.\label{it:minimal}
        \item For every $p'\in\cM\setminus\{p\}$, every $q\in\cM$ with $p\ll q$, $p'\ll q$, and every convex function $\varphi\colon[0,\infty)\to\R$ satisfying 
        \begin{equation}\label{eq:phi:blowup}
        \varphi(1)=0,\quad \varphi'_+(0)=-\infty,\quad \Div{\varphi}{p}{q}<\infty,\quad \Div{\varphi}{p'}{q}<\infty,
    \end{equation}
    one has
    \begin{equation}\label{eq:blowup}
    \lim_{\lambda\uparrow 1}\frac{\Div{\varphi}{\lambda p+(1-\lambda)p'}{q}-\Div{\varphi}{p}{q}}{1-\lambda}=-\infty.
    \end{equation}
    In particular, $\lambda=1$ is a strict local maximizer of the map
    \begin{equation*}
        [0,1]\to[0,\infty),\quad \lambda\mapsto\Div{\varphi}{\lambda p+(1-\lambda)p'}{q}.
    \end{equation*}\label{it:minimal:blowup}
    \end{enumerate}
\end{theorem}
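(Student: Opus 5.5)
Both implications go through Lemma~\ref{lem:derivative:div}\ref{it:lem:derivative}: under \eqref{eq:phi:blowup}, the one-sided derivative in \eqref{eq:blowup} equals $\int_\Omega g\,\d q$ with $g$ as in \eqref{eq:lem:g}. Throughout, write $f:=\frac{\d p}{\d q}$ and $f':=\frac{\d p'}{\d q}$.

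\ref{it:minimal}$\Rightarrow$\ref{it:minimal:blowup}. Fix $p'$, $q$ and $\varphi$ as in \ref{it:minimal:blowup}. Since $p'\neq p$ and $p$ is $\ll$-minimal, $p'\not\ll p$. Hence there is a set $A\in\cF$ with $p(A)=0$ and $p'(A)>0$. We may take $A$ to be contained in $\{f=0\}$, so that $q(A\cap\{f'>0\})>0$.
\begin{itemize}
\item On $A\cap\{f'>0\}$ we have $f'>f=0$. There $g=\varphi'_+(0)f'=-\infty$.
\item Elsewhere, $g$ takes values in $[-\infty,\infty)$ and $g\le g_1=\varphi(f')-\varphi(f)$, which is $q$-integrable by Lemma~\ref{lem:derivative:div}\ref{it:lem:integrable}.
\end{itemize}
So $\int_\Omega g\,\d q$ is well defined in $[-\infty,\infty)$, and it equals $-\infty$ because $g=-\infty$ on a set of positive $q$-measure. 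This gives \eqref{eq:blowup}. The strict local maximality then follows directly. The difference quotient tends to $-\infty$, so for $\lambda$ close to $1$ it is negative, i.e.\ $\Div{\varphi}{\lambda p+(1-\lambda)p'}{q}<\Div{\varphi}{p}{q}$.

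\ref{it:minimal:blowup}$\Rightarrow$\ref{it:minimal}. Argue by contraposition. Suppose some $p'\in\cM\setminus\{p\}$ satisfies $p'\ll p$. Take $q:=p\in\cM$, which dominates both measures, so $f=1$ and $f'=\frac{\d p'}{\d p}$. We need one admissible $\varphi$ for which the limit in \eqref{eq:blowup} is finite. A natural choice is the Hellinger-type function $\varphi(t)=\frac12(\sqrt t-1)^2$ from \eqref{eq:hellinger}. It satisfies:
\begin{itemize}
\item $\varphi(1)=0$;
\item $\varphi'_+(0)=-\infty$;
\item $\Div{\varphi}{p}{p}=0$;
\item $\Div{\varphi}{p'}{p}\le\frac12\int(f'+1)\,\d p=1<\infty$.
\end{itemize}
Since $f=1$, we get $g=\varphi'(1)(f'-1)=0$. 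Hence the limit equals $0\neq-\infty$, contradicting \ref{it:minimal:blowup}. (Any $\varphi$ that is differentiable at $1$ and has bounded growth, e.g.\ $\varphi(t)=\sqrt t$-type, works equally well.)

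The only delicate point is well-definedness in the first direction. One must confirm that $g$ is integrable from above, so that a set of positive measure where $g=-\infty$ indeed forces $\int g\,\d q=-\infty$ rather than producing an undefined expression. This is handled by the bound $g\le g_1$ combined with Lemma~\ref{lem:derivative:div}\ref{it:lem:integrable}. The choice of $A\subseteq\{f=0\}$ with $q(\{f'>0\}\cap A)>0$ is immediate from $p'(A)=\int_A f'\,\d q>0$.
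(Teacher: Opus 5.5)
Your proof is correct and follows the paper's argument. In \ref{it:minimal}$\Rightarrow$\ref{it:minimal:blowup} you use the same $p$-null set charged by $p'$ together with Lemma~\ref{lem:derivative:div}, applying it on all of $\Omega$ with the bound $g\le g_1$ rather than splitting into $A_0$ and $A_0^{\rm c}$ as the paper does. In the converse you take $q=p$ with the Hellinger function and compute the derivative to be $0$, whereas the paper simply notes $\Div{\varphi}{\lambda p+(1-\lambda)p'}{p}\geq 0=\Div{\varphi}{p}{p}$ by Jensen. The only slip is in your parenthetical example: $\sqrt t$ is concave, so you presumably mean the convex function $1-\sqrt t$.
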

\begin{proof}
    \noindent\ref{it:minimal}$\,\Rightarrow\,$\ref{it:minimal:blowup}:\ Let $p'\in\cM\setminus\{p\}$, let $q\in\cM$ with $p\ll q$ and $p'\ll q$.\ Moreover, let $\varphi$ satisfy \eqref{eq:phi:blowup}.\ Since $p$ is $\ll$-minimal, $p'\not\ll p$, so there exists $A_0\in\cF$ with $p(A_0)=0$ and $p'(A_0)>0$.\
    From $$p(A_0)=\int_{A_0}\frac{\d p}{\d q}\,\d q=0,$$ 
    it follows that $\frac{\d p}{\d q}=0$ $q$-a.s.\ on $A_0$.
    Set $f(\lambda):=\Div{\varphi}{\lambda p+(1-\lambda)p'}{q}$, and write
    \begin{align*}
    \frac{f(\lambda)-f(1)}{1-\lambda} & = \int_{A_0} \frac{\varphi\big(\lambda\frac{\d p}{\d q}+(1-\lambda)\frac{\d p'}{\d q}\big)-\varphi\big(\frac{\d p}{\d q}\big)}{1-\lambda}\,\d q\\
    & \qquad +\int_{A_0^{\rm c}}\frac{\varphi\big(\lambda\frac{\d p}{\d q}+(1-\lambda)\frac{\d p'}{\d q}\big)-\varphi\big(\frac{\d p}{\d q}\big)}{1-\lambda}\,\d q.
    \end{align*}
    By Lemma \ref{lem:derivative:div} and the definition of $A_0$, we obtain
    \begin{align*}
    \lim_{\lambda \uparrow 1} &\int_{A_0} \frac{\varphi\big(\lambda\frac{\d p}{\d q}+(1-\lambda)\frac{\d p'}{\d q}\big)-\varphi\big(\frac{\d p}{\d q}\big)}{1-\lambda}\,\d q \\
    &\qquad= \int_{A_0} \phi_+'(0)\cdot \frac{\d p'}{\d q}\,\d q= \phi_+'(0)\cdot p'(A_0)=-\infty.
    \end{align*}
    For the other integral, we use convexity of $\phi$ to obtain
    \[
    \lim_{\lambda \uparrow 1} \int_{A_0^{\rm c}} \frac{\varphi\big(\lambda\frac{\d p}{\d q}+(1-\lambda)\frac{\d p'}{\d q}\big)-\varphi\big(\frac{\d p}{\d q}\big)}{1-\lambda}\,\d q \le \int_{A_0^{\rm c}} \varphi\bigg(\frac{\d p'}{\d q}\bigg)-\varphi\bigg(\frac{\d p}{\d q}\bigg)\,\d q < \infty,
    \]
    where the boundedness also follows from Lemma \ref{lem:derivative:div}\ref{it:lem:integrable}.

    Combining the two pieces, $\lim_{\lambda\uparrow 1}\frac{f(\lambda)-f(1)}{1-\lambda}=-\infty$, which implies that $\lambda=1$ is a strict local maximizer of $f$.

    \noindent\ref{it:minimal:blowup}$\,\Rightarrow\,$\ref{it:minimal}:\ Suppose that $p$ is not $\ll$-minimal in $\cM$, and let $p'\in\cM\setminus\{p\}$ with $p'\ll p$.\ Take $q:=p$, so that $p\ll q$ and $p'\ll q$.\ Let $\varphi$ be convex with $\varphi(1)=0$, $\varphi'_+(0)=-\infty$, and of at most linear growth, i.e., $\varphi(t)\leq a+bt$ for all $t\geq 0$ and some $a,b\in\R$, as is the case, e.g., for the squared Hellinger distance~\eqref{eq:hellinger}.\ Then $\Div{\varphi}{p}{q}=0$ and $\Div{\varphi}{p'}{q}\leq a+b\int_\Omega \frac{\d p'}{\d q}\,\d q<\infty$, so that $\varphi$ satisfies~\eqref{eq:phi:blowup}.\ Setting again $f(\lambda):=\Div{\varphi}{\lambda p+(1-\lambda)p'}{q}$ and using that $\frac{\d p}{\d q}=1$ $q$-a.s., by Jensen's inequality, we obtain 
    \[
    f(\lambda)-f(1)=f(\lambda)\geq 0 \quad \text{for all }\lambda\in [0,1),
    \]
    which contradicts \ref{it:minimal:blowup}.
\end{proof}

Note that a suitable dominating measure for the previous theorem is always $q = \tfrac{1}{2}(p+p')$.\ In this case, $\frac{\d p}{\d q},\frac{\d p'}{\d q} \le 2$ $q$-a.s., so that condition \eqref{eq:phi:blowup} holds for every convex function $\varphi$ with $\varphi(1)=0$ and $\varphi'_+(0)=-\infty$.
Furthermore, the condition $\varphi'_+(0)=-\infty$ in Theorem~\ref{thm:minimal:divergence} is essential and cannot be omitted, even if $\Omega$ is finite and $\varphi$ is strictly convex.
\begin{example}\label{ex:finite:slope}
Let $\Omega=\{1,2,3,4\}$, let $g=(0,2,4,0)$, and consider the generalized moment set
\[
\cM:=\bigg\{r\in\cP(\Omega)\,\bigg|\,\sum_{i=1}^4 g_i r_i=1\bigg\}.
\]
Define
\[
p=\bigg(\frac12,\frac12,0,0\bigg),\quad
p'=\bigg(\frac34,0,\frac14,0\bigg),\quad\text{and}\quad
q=\bigg(\frac1{16},\frac{1}6,\frac16,\frac{29}{48}\bigg).
\]
All three measures belong to $\cM$, and $q$ has full support.\ Moreover, the probability measure $p$ is $\ll$-minimal in $\cM$.\ Indeed, if $r\in\cM$ and $r\ll p$, then $r$ is supported on $\{1,2\}$, and the equations $r_1+r_2=1$ and $2r_2=1$ force $r=p$.

Take the convex function $\varphi(t)=(t-1)^2$ associated with the Pearson $\chi^2$-divergence, for which $\varphi(1)=0$ and $\varphi'_+(0)=-2$.\ For
\[
f(\lambda):=\Div{\varphi}{\lambda  p+(1-\lambda) p'}{q},
\]
differentiation of $\phi$, together with Lemma \ref{lem:derivative:div}, give
\begin{align*}
\lim_{\lambda\uparrow 1} \frac{f(\lambda)-f(1)}{1-\lambda}
&=2\sum_{i=1}^4 q_i\bigg(1-\frac{p_i}{q_i}\bigg)\frac{p_i-p'_i}{q_i}
 =2\sum_{i=1}^4\frac{p_i(p_i'-p_i)}{q_i}\\
&=\frac{p_1'-p_1}{q_1}+\frac{p_2'-p_2}{q_2}=16\times \frac{1}4-6\times\frac12
=1>0.
\end{align*}
Consequently $f(\lambda)>f(1)$ for every $\lambda\in (0,1)$ sufficiently close to $1$. Thus, $\lambda=1$ is a strict local minimizer, rather than a local maximizer.\ This shows that the hypothesis of infinite negative slope at zero in Theorem \ref{thm:minimal:divergence} cannot be omitted.
\end{example}

\begin{example}\label{ex:blowup:divergences}
    The following prominent $\varphi$-divergences satisfy the properties stated in Theorem \ref{thm:minimal:divergence}:\ the $\alpha$-divergence for $\alpha \in (0,1]$, the Kullback-Leibler divergence, the Jensen-Shannon divergence, and the squared Hellinger distance.
\end{example}

In view of Theorem \ref{thm:minimal:divergence}, it is natural to ask whether the blow-up of a divergence map also characterizes the extreme points of~$\cM$. Revisiting Example \ref{ex:extreme:not:minimal}, we observe that this is not true in general.

\begin{example}\label{ex:extreme:no:blowup}
    In the setting of Example~\ref{ex:extreme:not:minimal}, consider the extreme point $p=p_1$, which is not $\ll$-minimal in $\cM$.

    Choose $p'=q=\mu\in\cM$, so that $\Div{\varphi}{p'}{q}=0<\infty$ for all $\phi$ with $\phi(1)=0$, and let $\varphi\colon [0,\infty)\to \R$ be convex and differentiable with $\varphi(1)=0$, $\varphi'_+(0)=-\infty$, and $\Div{\varphi}{p}{q}=\int_0^1 \varphi(2x)\, \d x<\infty$.\ The latter is, for example, satisfied if $\varphi(t)=t\log t$ for $t>0$, since, in this case, $\Div{\varphi}{p}{q}=\log 2-\tfrac12$.\ By Lemma~\ref{lem:derivative:div},
    \begin{align*}
    \lim_{\lambda\uparrow1}&\frac{\Div{\varphi}{\lambda p+(1-\lambda)p'}{q}-\Div{\varphi}{p}{q}}{1-\lambda}\\
    &\qquad =\int_0^1\varphi'(2x)(1-2x)\,\d x  =\frac12\int_0^2\varphi'(u)(1-u)\,\d u,
    \end{align*}
    and integration by parts yields
    \begin{align*}
    \frac12\int_0^2\varphi'(u)(1-u)\,\d u
    & =\frac12\bigg(\int_0^2\varphi(u)\,\d u-\varphi(0)-\varphi(2)\bigg)\\
    &=\Div{\varphi}{p}{q}-\frac{\varphi(0)+\varphi(2)}{2}\in \R.
    \end{align*}
    Moreover, using the convexity of $\varphi$ together with $\varphi(1)=0$,
    \[
    \frac12\int_0^2\varphi'(u)(1-u)\,\d u \leq \frac12 \int_0^2\varphi(1)-\varphi(u)\,\d u=-\frac12\int_0^2\varphi(u)\, \d u =-\Div{\varphi}{p}{q}.
    \]
   The differential ratio is therefore negative but finite in the limit.
\end{example}

A characterization of the extreme points of $\cM$ as strict local maximizers of a divergence map can be recovered after suitably restricting both the class of dominating measures and the class of functions $\varphi$ under consideration.\
To that end, we introduce the notion of an \textit{admissible dominating measure}, which plays a central role in the subsequent discussion.

\begin{definition}\label{def:admissible}
    Let $p,p'\in\cM$ with $p\neq p'$.\ We say that $q$ is a \textit{$p'$-admissible dominating measure} of $p$ if $q\in\cM$, $p \ll q$ and the following two conditions are satisfied
    \begin{enumerate}[label=\textup{(\alph*)}]
        \item $p'\ll q$ and $\frac{\d p'}{\d q}$ is $q$-a.s.\ bounded,\label{it:admissible:bounded}
        \item there exists $\eps_0 \in (0,1]$ such that $\frac{\d p'}{\d q}>\frac{\d p}{\d q}$ $q$-a.s.\ on $\big\{\frac{\d p}{\d q}<\eps_0\big\}$, i.e.,
        \[
         q\bigg(\bigg\{\frac{\d p'}{\d q}\leq \frac{\d p}{\d q}<\eps_0\bigg\}\bigg)=0.
        \]
        \label{it:admissible:eps}
    \end{enumerate}
\end{definition}

\begin{remark}\label{rem:admissible}
    For every pair $p,p'\in\cM$ with $p\neq p'$, there exists a $p'$-admissible dominating measure of $p$.\ A canonical choice is
    \[
    q:=\tfrac{1}{2}(p+p')\in\cM,
    \]
    for which $p\ll q $ and $p'\ll q$ with $\frac{\d p}{\d q}+\frac{\d p'}{\d q}=2$ $q$-a.s., and in particular $\frac{\d p'}{\d q}\leq 2$ $q$-a.s., so that condition \ref{it:admissible:bounded} in Definition \ref{def:admissible} is satisfied.\ Moreover, condition \ref{it:admissible:eps} holds with $\eps_0=1$.\ Indeed, on the set $\big\{\frac{\d p}{\d q}<1\big\}$, one has
    \[
    \frac{\d p'}{\d q}=2-\frac{\d p}{\d q}>1>\frac{\d p}{\d q}\quad q\text{-a.s.}
    \]
    Hence $q=\tfrac{1}{2}(p+p')$ is a $p'$-admissible dominating measure of $p$.\ Admissibility is, however, a genuine restriction on $q$. For instance, any $q\in \cP$ with
    \[
     q\bigg(\bigg\{\frac{\d p'}{\d q}=0=\frac{\d p}{\d q}\bigg\}\bigg)>0
    \] 
    violates the strict inequality in condition \ref{it:admissible:eps}.
\end{remark}

The previous discussion leads to the following main result, which provides several alternative characterizations of the extreme points of $\cM$.
\begin{theorem}\label{thm:extreme:divergence}
   For an element $p \in \cM$, the following statements are equivalent.
    \begin{enumerate}
        \item $p$ is an extreme point of $\cM$.\label{it:thm:extreme}
        \item \label{it:thm:bounded} There exists no $p' \in \cM\setminus \{p\}$ with $p'\ll p$ and $p$-a.s.\ bounded density $\frac{\mathrm{d}p'}{\mathrm{d}p}$.
        \item \label{it:thm:perturbation} For all $q\in \cM$ with $p\ll q$ and all $\eps\in (0,1)$, there exists no $p' \in \cM\setminus\{p\}$ with $p'\ll q$ and $q$-a.s.\ bounded density $\frac{\mathrm{d}p'}{\mathrm{d}q}$ and
        \begin{equation}\label{eq:perturbation}
           (1+\eps) \frac{\mathrm{d}p}{\mathrm{d}q} -\eps\frac{\mathrm{d}p'}{\mathrm{d}q}\geq 0 \quad q\text{-a.s.}
        \end{equation}
        \item \label{it:thm:maximizer}
        For every $p'\in\cM\setminus\{p\}$, every $p'$-admissible dominating measure $q$ of $p$, every $\eps_0 \in (0,1]$ satisfying condition \ref{it:admissible:eps} in Definition~\ref{def:admissible}, and every continuous convex function $\varphi \colon [0,\infty)\to [0,\infty)$ with $\varphi(0)>0$ and $\varphi=0$ on $[a,\infty)$ for some $a\in(0,\eps_0)$, the point $\lambda = 1$ is a strict local maximizer of the map
        \begin{equation}\label{eq:divergence:map}
        [0,1] \to [0,\infty),\quad \lambda \mapsto \Div{\varphi}{\lambda p + (1-\lambda)p'}{q}.
        \end{equation}
    \end{enumerate}
\end{theorem}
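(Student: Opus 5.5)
The plan is to close the cycle \ref{it:thm:extreme}$\Leftrightarrow$\ref{it:thm:bounded}, \ref{it:thm:extreme}$\Leftrightarrow$\ref{it:thm:perturbation}, and \ref{it:thm:bounded}$\Rightarrow$\ref{it:thm:maximizer}$\Rightarrow$\ref{it:thm:extreme}. The equivalence \ref{it:thm:extreme}$\Leftrightarrow$\ref{it:thm:bounded} is exactly Proposition~\ref{prop:extreme:minimal}, so nothing new is needed there.

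\emph{\ref{it:thm:extreme}$\Leftrightarrow$\ref{it:thm:perturbation}.} Suppose $q$, $\eps$ and $p'$ violate \ref{it:thm:perturbation}. Then $p-\eps(p'-p)$ lies in $H$ (affine), has total mass $1$, and is non-negative by \eqref{eq:perturbation}, so it belongs to $\cM$. Also $p+\eps(p'-p)=(1-\eps)p+\eps p'\in\cM$ by convexity. Their midpoint is $p$, hence $p$ is not extreme.

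Conversely, suppose $p=\lambda p'+(1-\lambda)p''$ with $p'\neq p$ and $\lambda\in(0,1)$. Take $q:=p$. The proof of Proposition~\ref{prop:extreme:minimal} gives $\frac{\d p'}{\d p}\le 1/\lambda$. Then $(1+\eps)-\eps\frac{\d p'}{\d p}\ge 1+\eps-\eps/\lambda\ge0$ as soon as $\eps\le \lambda/(1-\lambda)$, and such an $\eps\in(0,1)$ exists. This $p'$ therefore violates \ref{it:thm:perturbation}.

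\emph{\ref{it:thm:maximizer}$\Rightarrow$\ref{it:thm:extreme}.} If $p$ is not extreme, take $p'$ as just constructed and $q:=p$. Then $\frac{\d p'}{\d q}$ is bounded, and condition \ref{it:admissible:eps} holds with $\eps_0=1$, because $\{\frac{\d p}{\d q}<1\}$ is $q$-null. So $q$ is $p'$-admissible. Choose $a\in(0,1)$ and $\varphi(t)=(a-t)_+$, which is continuous, convex, satisfies $\varphi(0)>0$, and vanishes on $[a,\infty)$. Since $\frac{\d p}{\d q}=1\ge a$, we get $f(1)=0$, where $f(\lambda):=\Div{\varphi}{\lambda p+(1-\lambda)p'}{q}\ge0$. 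Hence $\lambda=1$ is not a strict local maximizer.

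\emph{\ref{it:thm:bounded}$\Rightarrow$\ref{it:thm:maximizer}.} This is the main step. I would argue pointwise rather than through Lemma~\ref{lem:derivative:div}, since a non-positive derivative alone would not give strictness. Write $x=\frac{\d p}{\d q}$ and $y=\frac{\d p'}{\d q}\le C$. Note that $\varphi$ is convex, non-negative and eventually zero, so it is non-increasing.
\begin{enumerate}
\item On $\{x<\eps_0\}$, admissibility gives $y>x$, so $\lambda x+(1-\lambda)y\ge x$ and hence $\varphi(\lambda x+(1-\lambda)y)\le\varphi(x)$.
\item On $\{x\ge\eps_0\}$, the combination satisfies $\lambda x+(1-\lambda)y\ge\lambda\eps_0\ge a$ for $\lambda\ge a/\eps_0$. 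There both sides vanish.
\end{enumerate}
Thus $f(\lambda)\le f(1)$ for all $\lambda\in[a/\eps_0,1)$.

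For strictness, let $b:=\inf\{t:\varphi(t)=0\}\in(0,a]$. Convexity together with $\varphi(0)>0$ makes $\varphi$ strictly decreasing on $[0,b]$. On $\{x<b\}$ we have $y>x$, so the combination lies strictly above $x$ and the inequality above is strict there. Hence $f(\lambda)<f(1)$ unless $q(\{x<b\})=0$.

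This degenerate case is where extremality enters, via \ref{it:thm:bounded}, and I expect it to be the crux. If $x\ge b$ $q$-a.s., then $q\ll p$ with $\frac{\d q}{\d p}\le 1/b$, and $q\in\cM$, so \ref{it:thm:bounded} forces $q=p$. But then $p'\ll p$ with bounded density $y$, and \ref{it:thm:bounded} gives $p'=p$, a contradiction. Hence $\lambda=1$ is a strict local maximizer, which completes the cycle.
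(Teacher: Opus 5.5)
Your proof is correct, but it is organized differently from the paper's. The paper runs the cycle (i)$\Rightarrow$(iii)$\Rightarrow$(iv)$\Rightarrow$(ii)$\Leftrightarrow$(i). Its main step (iii)$\Rightarrow$(iv) goes through Lemma~\ref{lem:derivative:div}. After analysing $\varphi'_\pm$ (both vanish on $(a,\infty)$, $\varphi'_+(a)=0$, and $\varphi'_+<0$ on $[0,a)$ after possibly shrinking $a$), it identifies the left derivative of $f$ at $\lambda=1$ as $\int_{\{x<a\}}\varphi'_+(x)(y-x)\,\d q$. It then uses (iii), with $\eps$ chosen so that $\eps C/(1+\eps)<a$, to show that $\{x<a\}$ has positive $q$-measure, because this set contains $\{(1+\eps)x<\eps y\}$. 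The derivative is therefore strictly negative (possibly $-\infty$), so the derivative route does give strictness; your stated reason for avoiding it does not apply to this proof. You instead prove (i)$\Leftrightarrow$(iii) directly, taking $q=p$ and using $\frac{\d p'}{\d p}\le 1/\lambda$ for the converse. You then derive (iv) from (ii) by a pointwise comparison: $\varphi$ is non-increasing, admissibility moves $\lambda x+(1-\lambda)y$ to the right of $x$ on $\{x<\eps_0\}$, and both integrands vanish on $\{x\ge\eps_0\}$ once $\lambda\ge a/\eps_0$. Strictness follows from $\varphi$ being strictly decreasing on $[0,b]$ together with $q(\{x<b\})>0$. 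Your derivation of that last fact from (ii) is a neat substitute for the paper's use of (iii): if $x\ge b$ $q$-a.s., then $q\ll p$ with $\frac{\d q}{\d p}\le 1/b$, which forces $q=p$ and then $p'=p$. Your route is more elementary, needing neither one-sided derivatives nor monotone convergence, and it gives an explicit interval $[a/\eps_0,1)$ on which $f(\lambda)<f(1)$. The paper's route instead yields the finer first-order information that the left derivative at $\lambda=1$ is strictly negative, which parallels the blow-up criterion of Theorem~\ref{thm:minimal:divergence}.
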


\begin{proof}
The equivalence \ref{it:thm:extreme}$\,\Leftrightarrow\,$\ref{it:thm:bounded} follows from Proposition \ref{prop:extreme:minimal}.

\noindent
\ref{it:thm:extreme}$\,\Rightarrow\,$\ref{it:thm:perturbation}:\
    Let $q \in \cM$ with $p\ll q$, $\eps\in (0,1)$, and $p'\in \cM\setminus\{p\}$ with $p'\ll q$ such that \eqref{eq:perturbation} holds. Since $\cM$ is convex, $p + \epsilon (p'-p) = (1-\epsilon) p + \epsilon p' \in \cM$. Now, for any $A \in \cF$,
    \[
        (p-\epsilon (p'-p))(A) = \int_A (1+\eps) \frac{\mathrm{d}p}{\mathrm{d}q}-\eps \frac{\mathrm{d}p'}{\mathrm{d}q}\, \d q\geq 0,
    \]
    which shows that $p-\epsilon (p'-p)\in \cP$ and therefore $p-\epsilon (p'-p)\in \cM$. Since $p=\frac12\big(p+\epsilon (p'-p)\big)+\frac12\big(p-\epsilon (p'-p)\big)$, $p$ is not an extreme point of $\cM$.

\noindent
\ref{it:thm:perturbation}$\,\Rightarrow\,$\ref{it:thm:maximizer}:\
    Assume that \ref{it:thm:perturbation} is satisfied.\ Let $p' \in \cM\setminus\{p\}$, $q$ be a $p'$-admissible dominating measure of $p$, $\eps_0 \in (0,1]$ satisfy \ref{it:admissible:eps}, $\varphi\colon[0,\infty)\to[0,\infty)$ be a continuous convex function with $\varphi(0)>0$ and $\varphi=0$ on $[a,\infty)$ for some $a\in(0,\eps_0)$, and $C\geq 1$ such that $\frac{\d p'}{\d q}\leq C$ $q$-a.s.

    We first observe that, by convexity, $\varphi$ attains its maximum over $[0,a]$ at an endpoint, and since $\varphi(a)=0<\varphi(0)$, we get $0\leq\varphi\leq\varphi(0)$, so that $\Div{\varphi}{p}{q}\leq\varphi(0)<\infty$.\ Without loss of generality, we may further assume that $\varphi>0$ on $[0,a)$, otherwise, replace $a$ by $\tilde a:=\sup\{t\geq 0\,\vert\,\varphi(t)>0\}\leq a$.\
    Since $\varphi$ is convex with $\varphi=0$ on $[a,\infty)$, we have $\varphi'_+=\varphi'_-=0$ on $(a,\infty)$ and $\varphi'_+(a)=0$. Furthermore, $\varphi'_+<0$ on $[0,a)$.\ As a matter of fact, if $\varphi'_+(t)\geq 0$ for some $t\in[0,a)$, then $\varphi$ would be non-decreasing on $[t,a]$, so that $0<\varphi(t)\leq\varphi(a)=0$, a contradiction.

    It remains to show that $\lambda=1$ is a strict local maximizer of the map $f \colon [0,1] \to \R$, $\lambda \mapsto \Div{\varphi}{\lambda p + (1 - \lambda) p'}{q}$.\
    By Lemma~\ref{lem:derivative:div}\ref{it:lem:derivative}, it holds
    \[
    \lim_{\lambda\uparrow 1}\frac{f(\lambda)-f(1)}{1-\lambda}=\int_\Omega g\,\d q
    \]
    with $g$ defined as in \eqref{eq:lem:g}.\ We start by analyzing the function $g$.\ On the event $\big\{\frac{\d p}{\d q}>a\big\}$, both one-sided derivatives of $\varphi$ vanish, so $g=0$.\
    Since $$\bigg\{\frac{\d p}{\d q}\leq a\bigg\}\subseteq\bigg\{\frac{\d p}{\d q}<\eps_0\bigg\},$$ hypothesis \ref{it:admissible:eps} gives $\frac{\d p'}{\d q}>\frac{\d p}{\d q}$ $q$-a.s.\ on $\big\{\frac{\d p}{\d q}=a\big\}$.\ Together with $\varphi'_+(a)=0$, this implies that $g=0$ on the event $\big\{\frac{\d p}{\d q}=a\big\}$, so that
    \[
    \lim_{\lambda\uparrow 1}\frac{f(\lambda)-f(1)}{1-\lambda}=\int_{\{\frac{\d p}{\d q}<a\}}\varphi'_+\!\bigg(\frac{\d p}{\d q}\bigg)\bigg(\frac{\d p'}{\d q}-\frac{\d p}{\d q}\bigg)\,\d q.
    \]
    Next, we show that $q\big(\big\{\frac{\d p}{\d q}<a\big\}\big)>0$.\ To that end, choose $\eps\in (0,1)$ such that $\frac{\eps C}{1+\eps}<a$. By \ref{it:thm:perturbation} applied to this $\eps$, the set
    \[
    A:=\bigg\{(1+\eps)\frac{\d p}{\d q}<\eps \frac{\d p'}{\d q}\bigg\}
    \]
    satisfies $q(A)>0$, and $A\subseteq \big\{\frac{\d p}{\d q}<\frac{\eps C}{1+\eps}\big\}\subseteq \big\{\frac{\d p}{\d q}<a\big\}$. Finally, by hypothesis \ref{it:admissible:eps}, $\varphi'_+\!\big(\frac{\d p}{\d q}\big)\big(\frac{\d p'}{\d q}-\frac{\d p}{\d q}\big)<0$ $q$-a.s.\ on $\big\{\frac{\d p}{\d q}<a\big\}$, and this set has positive $q$-measure, which implies that
    \[
    \lim_{\lambda\uparrow 1}\frac{f(\lambda)-f(1)}{1-\lambda}<0.
    \]
    Consequently, $\frac{f(\lambda)-f(1)}{1-\lambda}$ is strictly negative for $\lambda\in [0,1)$ sufficiently close to $1$, so that $\lambda=1$ is a strict local maximizer of the map \eqref{eq:divergence:map}.

\noindent
\ref{it:thm:maximizer}$\,\Rightarrow\,$\ref{it:thm:bounded}:\
    Suppose there exists $p' \in \cM\setminus \{p\}$ with $p'\ll p$ and $p$-a.s.\ bounded density $\frac{\mathrm{d}p'}{\mathrm{d}p}$.\
    Then, choosing $q=p$, the hypotheses on $q$ in \ref{it:thm:maximizer} are trivially satisfied. Moreover, every convex function $\varphi$ as in \ref{it:thm:maximizer} satisfies $\varphi(1)=0$, since $\varphi=0$ on $[a,\infty)$ with $a<\eps_0:=1$, and therefore, by Jensen's inequality,
    \[
    \Div{\varphi}{p}{q}=\Div{\varphi}{p}{p}=0\leq \Div{\varphi}{\lambda p+(1-\lambda)p'}{q}.
    \]
    Hence, $\lambda=1$ is a global minimizer of the map \eqref{eq:divergence:map} and therefore not a strict local maximizer, contradicting \ref{it:thm:maximizer}.
\end{proof}

\begin{remark}\label{rem:admissible:role}
We briefly comment on item \ref{it:thm:maximizer} in Theorem~\ref{thm:extreme:divergence}. The $p'$-admissible dominating measure $q$ of $p$ plays the role of a reference measure against which $p$ and $p'$ are read off as densities.\ Condition \ref{it:admissible:eps} in Definition~\ref{def:admissible} then constrains how these densities relate to each other.\ In particular, $p'$ is required to be strictly heavier than $p$, in the $q$-density sense, on a region where $p$ is light with respect to the dominating measure $q$. Note that no uniform separation between $\frac{\d p'}{\d q}$ and $\frac{\d p}{\d q}$ is assumed.\ The density $\frac{\d p'}{\d q}$ may itself approach $0$, as long as it stays above $\frac{\d p}{\d q}$ $q$-a.s.\ on this region.
\end{remark}

\section{Examples and applications}\label{sec:applications}
In this section, we apply our results to several settings.\ We revisit classical results and investigate extensions and novel directions.

\subsection{Finite spaces}\label{sec:finite}
In the case of a finite space $\Omega$, $\ll$-minimality is equivalent to extremality. Indeed, on a finite space, every density is bounded, so that $\ll$-minimality coincides with condition \ref{it:thm:bounded} in Theorem~\ref{thm:extreme:divergence}.\ This roughly corresponds to the characterization of vertices of polytopes of the form $\{x\in\R^\Omega \,\vert\, x\geq 0,\ Ax=b\}$ in \citet[Chapter 8]{schrijver1986theory}.\ In combination with Theorem~\ref{thm:minimal:divergence}, this yields the following characterization of extreme points in terms of strict local maximizers of any divergence.\ 

\begin{theorem}\label{thm:finite:divergence}
    Let $\Omega$ be finite and $\cF$ be the power set.\
    \begin{enumerate}
    \item[a)]\label{it:finite:measure} There exists a measure $q\in \cM$ with $p'\ll q$ for all $p'\in \cM$.
    \item[b)]\label{it:finite:equivalances} Let $q\in \cM$ with $p'\ll q$ for all $p'\in \cM$ and $\varphi\colon[0,\infty)\to\R$ be a convex function with $\varphi(1)=0$ and $\varphi_+'(0)=-\infty$.\ Then, for $p\in \cM$, the following statements are equivalent.
    \begin{enumerate}
        \item\label{it:finite:extreme}$p$ is an extreme point of $\cM$.
        \item\label{it:finite:blowup} For every $p'\in \cM\setminus\{p\}$, it holds
    \begin{equation}
            \lim_{\lambda\uparrow 1}\frac{\Div{\varphi}{\lambda p+(1-\lambda)p'}{q}-\Div{\varphi}{p}{q}}{1-\lambda}=-\infty.\label{eq:finite:blowup}
    \end{equation} 
    \item\label{it:finite:maximizer} $p$ is a strict local maximizer of the function
        \[
         \cM\to \R,\quad p'\mapsto \Div{\varphi}{p'}{q}.
        \]
    \end{enumerate}
    \end{enumerate}
\end{theorem}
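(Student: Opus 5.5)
Part a) is proved by a support argument. Since $\Omega$ is finite, define $S:=\{\omega\in\Omega\mid r(\{\omega\})>0 \text{ for some } r\in\cM\}$. For each $\omega\in S$ pick some $r_\omega\in\cM$ with $r_\omega(\{\omega\})>0$, and set $q:=\frac{1}{|S|}\sum_{\omega\in S}r_\omega$. This is a finite convex combination, so $q\in\cM$ by convexity of $\cM=\cP\cap H$, and $q(\{\omega\})>0$ for every $\omega\in S$. Any $p'\in\cM$ vanishes outside $S$, so $p'\ll q$. If $S$ were empty then $\cM$ would be empty, which is excluded by assumption.

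For part b), first record that on a finite space extremality and $\ll$-minimality coincide. Proposition~\ref{prop:extreme:minimal} (equivalently Theorem~\ref{thm:extreme:divergence}\ref{it:thm:bounded}) reduces extremality to the absence of $p'\in\cM\setminus\{p\}$ with $p'\ll p$ and bounded density. On a finite space every density $\frac{\d p'}{\d p}$ takes finitely many finite values on the atoms charged by $p$, so it is automatically bounded. Thus extremality is exactly $\ll$-minimality in the sense of Definition~\ref{def:minimal}.

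\ref{it:finite:extreme}$\Rightarrow$\ref{it:finite:blowup}. By the reduction, $p$ is $\ll$-minimal, and we apply Theorem~\ref{thm:minimal:divergence} with this fixed $q$. Its hypotheses hold: $p,p'\ll q$ by the choice of $q$, and $\Div{\varphi}{p}{q},\Div{\varphi}{p'}{q}<\infty$ because they are finite sums of real values of $\varphi$ (only atoms with $q(\{\omega\})>0$ contribute). This gives \eqref{eq:finite:blowup}.

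\ref{it:finite:blowup}$\Rightarrow$\ref{it:finite:maximizer}. This is the step needing care, since \ref{it:finite:maximizer} asks for strict local maximality over all of $\cM$, not merely along each segment. My plan is to use that $\cM$ is a polytope: it is the intersection of the simplex with an affine subspace of $\R^\Omega$. Write every $r\in\cM$ near $p$ as $r=p+t(p'-p)$ with $p'$ ranging over the union $F$ of the facets of $\cM$ not containing $p$ (a compact set) and $t\in[0,1]$ small. I then need the directional derivatives $\int g\,\d q$ from Lemma~\ref{lem:derivative:div} to be $-\infty$ uniformly in $p'\in F$. I would get this directly. By minimality, every $p'\neq p$ charges the set $Z:=\{\omega\mid p(\{\omega\})=0\}$. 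Hence $\inf_{p'\in F}p'(Z)=:\delta>0$ by compactness and continuity of $p'\mapsto p'(Z)$.

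Then bound the difference quotient. On $Z$, the quotient is at most $q(\{\omega\})\,\frac{\varphi(t\,p'_\omega/q_\omega)-\varphi(0)}{t}$. Off $Z$, $\frac{d p}{dq}$ is bounded below by a positive constant and $\varphi$ is locally Lipschitz there, so the quotient is bounded by a constant uniform in $p'\in F$. Since $\varphi'_+(0)=-\infty$, the $Z$-part tends to $-\infty$ uniformly in $p'$: some atom in $Z$ has $p'_\omega\ge\delta/|\Omega|$, and the other terms are bounded above by monotonicity of difference quotients. Hence $\Div{\varphi}{r}{q}<\Div{\varphi}{p}{q}$ for all $r\neq p$ in a neighbourhood of $p$ in $\cM$.

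\ref{it:finite:maximizer}$\Rightarrow$\ref{it:finite:extreme}. If $p$ is not extreme, the reduction gives $p'\neq p$ with $p'\ll p$, and $p=\frac12(p_+ + p_-)$ with $p_\pm:=p\pm\eps(p'-p)\in\cM$ as in the proof of Proposition~\ref{prop:extreme:minimal}. Convexity of $r\mapsto\Div{\varphi}{r}{q}$ along this segment gives $\Div{\varphi}{p}{q}\le\max\{\Div{\varphi}{p_+}{q},\Div{\varphi}{p_-}{q}\}$. Since $\eps$ can be taken arbitrarily small, there are points of $\cM$ arbitrarily close to $p$ whose divergence is at least that of $p$, contradicting strict local maximality.
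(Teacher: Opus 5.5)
Part a), your step (i)$\Rightarrow$(ii) and your step (iii)$\Rightarrow$(i) essentially match the paper. The gap is in the step you label (ii)$\Rightarrow$(iii): it never uses (ii). Its key input is that every $p'\in\cM\setminus\{p\}$ charges $Z=\{\omega\mid p(\{\omega\})=0\}$. You take this from $\ll$-minimality, which you only derived from (i). So as written you have shown (i)$\Rightarrow$(ii), (i)$\Rightarrow$(iii) and (iii)$\Rightarrow$(i), and nothing leads out of (ii).

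The repair is one line. Suppose some $p'\in\cM\setminus\{p\}$ had $p'(Z)=0$. Then the function $g$ of Lemma~\ref{lem:derivative:div} vanishes on $Z$, since both densities are $0$ there and $(-\infty)\cdot 0=0$. Off $Z$ it is finite, since $\frac{\d p}{\d q}>0$ there and $\varphi'_\pm$ are real. So the limit in \eqref{eq:finite:blowup} would be a finite sum, contradicting (ii). With this observation, your uniform estimate genuinely starts from (ii).

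Modulo that, your (ii)$\Rightarrow$(iii) takes a different route from the paper. The paper applies (ii) only along the finitely many segments from $p$ to the vertices $p_1',\ldots,p_m'$ of the polytope $\cM$. It picks a common $\lambda^*$ with $\Div{\varphi}{\lambda^*p+(1-\lambda^*)p_i'}{q}<\Div{\varphi}{p}{q}$ for all $p_i'\neq p$. Convexity of $r\mapsto\Div{\varphi}{r}{q}$ then carries the strict inequality to every point of $\big(\lambda^*p+(1-\lambda^*)\cM\big)\setminus\{p\}$, a relative neighbourhood of $p$. No uniformity over directions and no further analysis of $\varphi$ is needed.

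You instead build the uniformity by hand: a radial parametrization over the compact set $F$, the bound $\inf_{F}p'(Z)>0$, and explicit upper bounds on the difference quotients. These estimates are correct. Each $Z$-term equals $p'_\omega$ times the difference quotient of $\varphi$ at $0$ evaluated at $t\,p'_\omega/q_\omega\le t/\min_{q_\omega>0}q_\omega$. Monotonicity of that quotient then gives both the uniform upper bound and the uniform divergence to $-\infty$.

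Your route yields an explicit neighbourhood, but it is heavier. It also rests on a polytope fact you state without proof: every $r\in\cM\setminus\{p\}$ can be written as $p+t(p'-p)$ with $p'\in F$ and $t\le |r-p|/\operatorname{dist}(p,F)$. This follows by taking $p'$ to be the exit point of the ray from $p$ through $r$ and noting that the facet inequality blocking the ray is slack at $p$. The paper relies on the analogous, also unproved, fact that $\lambda^*p+(1-\lambda^*)\cM$ contains a relative neighbourhood of $p$.
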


\begin{proof}\
\begin{enumerate}
\item[a)] For each
\[
\omega_0\in \Omega_0:=\big\{\omega\in \Omega\,\big|\, \exists p'\in \cM \colon p'(\{\omega\})>0\big\},
\]
let $p_{\omega_0}\in \cM$ with $p_{\omega_0}(\{\omega_0\})>0$.\ Since $\Omega$ is finite and $\cM$ is nonempty, it follows that $|\Omega_0|\in \N$.\ Hence,
\[
q:=\frac1{|\Omega_0|}\sum_{\omega_0\in \Omega_0} p_{\omega_0}
\]
satisfies $p'\ll q$ for all $p'\in \cM$.\ 
\item[b)] Let $q\in \cM$ with $p'\ll q$ for all $p'\in \cM$.\ Since $\Omega$ is finite, it follows that
\[
\Div{\varphi}{p'}{q}<\infty
\]
for all $p'\in \cM$. We now prove the equivalence.

\ref{it:finite:extreme}$\,\Rightarrow\,$\ref{it:finite:blowup}:\ Assume $p$ is an extreme point of $\cM$.\ As observed above, by Theorem~\ref{thm:extreme:divergence}, it follows that $p$ is $\ll$-minimal in $\cM$.\ Let $p'\in\cM\setminus\{p\}$.\ Since $\Omega$ is finite, we have $\Div{\varphi}{p}{q}<\infty$ and $\Div{\varphi}{p'}{q}<\infty$, as observed above, so that condition \eqref{eq:phi:blowup} is met.\ Theorem~\ref{thm:minimal:divergence} then yields the limit~\eqref{eq:finite:blowup}, and hence the strict local maximality.

    \noindent \ref{it:finite:blowup}$\,\Rightarrow\,$\ref{it:finite:maximizer}:\
Since $\cM = H \cap \cP$ and $H$ is an affine subspace, $\cM$ has only finitely many distinct extreme points $p_1', \ldots, p_m' \in \cM$ with $m\in \N$, cf.\ \citet[Chapter 3.1, Statement 4]{grunbaum2003convex}. If $\cM$ is a singleton, there is nothing to show. Since every element of $\cM$ is the convex combination of $p_1', \ldots, p_m'$ by the Krein--Milman theorem, we may thus assume $m \geq 2$ in the following.
For each $i \in \{1, \ldots, m\}$ with $p_i' \neq p$, we find $\lambda_i \in (0,1)$ such that $\Div{\varphi}{\lambda p + (1-\lambda) p_i'}{q}$ is strictly increasing for $\lambda \in [\lambda_i, 1]$ by \ref{it:finite:blowup}. Let \[
\lambda^*:=\max\big\{\lambda_i\, \big|\, i\in\{1,\ldots, m\}\text{ with }p_i'\neq p \big\} \in (0,1)
\]be their maximum, so that $\Div{\varphi}{\lambda^* p + (1-\lambda^*)p_i'}{q} < \Div{\varphi}{p}{q}$ for all $i=1,\ldots, m$ with $p_i' \neq p$.\ Consider the set $U := \lambda^* p + (1-\lambda^*) \cM$ and note that it contains a neighborhood of $p$ that is open relative to $\cM$. Let $p'\in U\setminus \{p\}$, and use the Krein--Milman theorem to write $$p' = \lambda^* p + (1-\lambda^*) \sum_{i=1}^m \theta_i p_i' \in U$$ with $\theta_i \geq 0$ for $i=1,\ldots, m$ and $\sum_{i=1}^m \theta_i = 1$. Since $p'\neq p$, there is at least one $i\in \{1,\ldots, m\}$ with $\theta_i>0$ and $p_i' \neq p$.\ By convexity of $\varphi$, we thus have
\begin{align*}
    \Div{\varphi}{p'}{q} &= \Div{\varphi}{\sum_{i=1}^m \theta_i (\lambda^* p + (1-\lambda^*) p_i')}{q}\\
    &\leq \sum_{i=1}^m \theta_i \Div{\varphi}{\lambda^* p + (1-\lambda^*)p_i'}{q} < \Div{\varphi}{p}{q}.
\end{align*}
Consequently, $p$ is a strict local maximizer of $\Div{\varphi}{\,\cdot\,}{q}$ on $\cM$.
    
\ref{it:finite:maximizer} $\,\Rightarrow\,$\ref{it:finite:extreme}:\ Suppose that $p$ is not an extreme point of $\cM$.\ Then, there exist $p_1,p_2\in \cM\setminus\{p\}$ and $\theta\in (0,1)$ such that $p=\theta p_1+(1-\theta) p_2$.\ Let $d:=p_1-p_2$, and, for every $\delta\in\big(0,\min\{\theta,1-\theta\}\big)$, define $p_\delta^{+}:=p+\delta d$ and $p_\delta^{-}:=p-\delta d$. Note that $p=\tfrac12 p_\delta^{+}+\tfrac12 p_\delta^{-}$ and $$p_\delta^{\pm} = (\theta \pm \delta)p_1 + (1-\theta \mp \delta) p_2\in \cM.$$
    By convexity of $\varphi$,
\begin{align*}
    \Div{\varphi}{p}{q}\leq \frac12\Div{\varphi}{p_\delta^{+}}{q}+\frac12\Div{\varphi}{p_\delta^{-}}{q} \leq \max\big\{\Div{\varphi}{p_\delta^{+}}{q},\Div{\varphi}{p_\delta^{-}}{q}\big\}.
    \end{align*}
Hence, $p$ is not a strict local maximizer of $\Div{\varphi}{\,\cdot\,}{q}$ on $\cM$.
    \end{enumerate}
    The proof is complete.
\end{proof}

To the best of our knowledge, the connection between extremality and local maximization of a divergence in finite spaces was first noted in \cite{bauch2025correlationuncertaintydecisiontheoreticapproach} in the setting of multi-marginal couplings.\ Theorem~\ref{thm:finite:divergence} generalizes \cite[Theorem 4]{bauch2025correlationuncertaintydecisiontheoreticapproach} in two directions.\ First, it applies  to arbitrary sets of the form $\cM=\cP\cap H$ as described at the beginning of Section \ref{sec:extreme}, and not only to sets of multi-marginal couplings. Second, it is independent of the choice of the dominating reference measure $q$.

\subsection{Integral constraints}\label{sec:integral}
Given a family $(f_i)_{i \in \cI}$ of measurable functions $\Omega\to \R$ with a nonempty index set $\cI$, and $(a_i)_{i \in \cI} \subset \R$, we consider
\[
\cM = \bigg\{ p \in \cP \,\bigg\vert\, f_i\in L^1(p)\text{ with }\int_\Omega f_i\,\d p = a_i \text{ for all } i \in \cI \bigg\},
\]
where $L^1(p)$ denotes the set of measurable functions $f \colon \Omega \to \R$ with $\int_\Omega |f|\,\d p < \infty$.
In the case where $\cI = \{1, \dots, n\}$, for $n \in \N$, we give a complete characterization of the extreme points of $\cM$.\ 
This generalizes and extends the classical characterization by \cite{karr1983extreme} and \cite{winkler1988extreme}, which we state as a corollary below.\
Recall that a set $A\in \cF$ is a \emph{$p$-atom} if $p(A)>0$ and every $B\in \cF$ with $B\subseteq A$ satisfies $p(B)\in\{0,p(A)\}$.\ For a $p$-atom $A\in \cF$, we consider
\begin{equation}\label{eq:atom:measure}
p_A(B):=\frac{p(A\cap B)}{p(A)},\qquad B\in \cF.
\end{equation}
Then, $p_A$ is a $\{0,1\}$-valued probability measure, and therefore an extreme point of $\cP$.\ Moreover, every measurable function $\Omega\to\R$ is $p_A$-a.s.\ constant, and we denote by $f_i(A)$ the constant value that $f_i$ takes $p_A$-a.s.\ for $i=1,\ldots, n$.

\begin{theorem}\label{thm:moment:extreme}
Assume that $\cI=\{1,\ldots,n\}$ for some $n\in \N$. Then, for $p\in \cM$, the following statements are equivalent.
\begin{enumerate}
    \item $p$ is an extreme point of $\cM$.\label{it:moment:extreme}
    \item There exist $m\in \N$, coefficients $\alpha_1,\ldots, \alpha_m>0$, and pairwise disjoint $p$-atoms $A_1,\ldots, A_m\in \cF$ such that
    \[
    p=\sum_{j=1}^m\alpha_j\, p_{A_j}
    \]
    and the matrix
    \[
    \left(\begin{matrix}
    1 & \dots & 1 \\
    f_1(A_1) & \dots & f_1(A_m) \\
    \vdots & & \vdots \\
    f_n(A_1) & \dots & f_n(A_m)
    \end{matrix}\right)
    \]
    has rank $m$, i.e., the vectors
    \[
    v_j:=\big(1,f_1(A_j),\ldots,f_n(A_j)\big)^\top\in\R^{n+1},
    \qquad j=1,\ldots,m,
    \]
    are linearly independent. In particular, extremality forces $m\leq n+1$.\label{it:moment:atomic}
    \item $p$ is $\ll$-minimal in $\cM$.\label{it:moment:minimal}
    \item For every $p'\in\cM\setminus\{p\}$, every $q\in\cM$ with $p\ll q$ and $p'\ll q$, and every convex function $\varphi\colon[0,\infty)\to\R$ satisfying \eqref{eq:phi:blowup},
    one has
    \[
    \lim_{\lambda\uparrow 1}\frac{\Div{\varphi}{\lambda p+(1-\lambda)p'}{q}-\Div{\varphi}{p}{q}}{1-\lambda}=-\infty.
    \]
    In particular, $\lambda=1$ is a strict local maximizer of the map
    \begin{equation*}
        [0,1]\to[0,\infty),\quad \lambda\mapsto\Div{\varphi}{\lambda p+(1-\lambda)p'}{q}.
    \end{equation*}\label{it:moment:blowup}
\end{enumerate}
\end{theorem}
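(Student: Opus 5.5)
We plan to prove the cycle (i)$\Rightarrow$(ii)$\Rightarrow$(iii)$\Rightarrow$(i) and to handle (iii)$\Leftrightarrow$(iv) separately. The equivalence (iii)$\Leftrightarrow$(iv) is exactly Theorem~\ref{thm:minimal:divergence}, so nothing needs to be done there. The implication (iii)$\Rightarrow$(i) also comes for free. $\ll$-minimality excludes every $p'\ll p$ with $p'\neq p$, and in particular those with bounded density. Proposition~\ref{prop:extreme:minimal} then gives extremality. So the real work is (i)$\Rightarrow$(ii)$\Rightarrow$(iii).

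For (i)$\Rightarrow$(ii), fix an extreme point $p$ and take any $h\in L^\infty(p)$. For such $h$, every moment $\int f_i h\,\d p$ is finite. Consider the linear map
\[
T\colon L^\infty(p)\to\R^{n+1},\qquad T(h)=\Big(\int h\,\d p,\int f_1h\,\d p,\ldots,\int f_nh\,\d p\Big).
\]
Suppose $h\in\ker T$ with $h\neq 0$ in $L^\infty(p)$. Rescale $h$ so that $\|h\|_\infty\le \tfrac12$. Then $p'=(1+h)p$ lies in $\cM$, is different from $p$, and has density bounded by $\tfrac32$; this contradicts Proposition~\ref{prop:extreme:minimal}. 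Hence $T$ is injective, so $\dim L^\infty(p)\le n+1$.

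Next we use finite dimensionality to extract atoms. If $p$ had a nonatomic part, or infinitely many disjoint atoms, we could find $n+2$ disjoint sets of positive $p$-measure. Their indicators are linearly independent in $L^\infty(p)$, which is impossible. So $L^\infty(p)$ is spanned by indicators of finitely many disjoint atoms $A_1,\ldots,A_m$ that cover $\Omega$ up to a $p$-null set. Setting $\alpha_j=p(A_j)$ gives $p=\sum_j\alpha_jp_{A_j}$. Injectivity of $T$ on $\mathrm{span}\{\one_{A_j}\}$ means the vectors $\alpha_jv_j$ are linearly independent, and hence so are the $v_j$. This gives the rank condition and $m\le n+1$.

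For (ii)$\Rightarrow$(iii), let $p'\in\cM$ with $p'\ll p$. Then $p'$ is carried by $\bigcup_jA_j$. On each atom $A_j$ every measurable set has $p$-measure $0$ or $p(A_j)$, so $p'$ restricted to $A_j$ equals $\beta_jp_{A_j}$ for some $\beta_j\ge0$. In particular $f_i$ is $p'$-a.s.\ equal to $f_i(A_j)$ on $A_j$, and $f_i\in L^1(p')$ automatically. The constraints defining $\cM$ then read $\sum_j(\beta_j-\alpha_j)v_j=0$. Linear independence of the $v_j$ forces $\beta=\alpha$, hence $p'=p$.

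The main obstacle is the atom-extraction step inside (i)$\Rightarrow$(ii). We need a careful measure-theoretic argument on an arbitrary measurable space to show that finite dimensionality of $L^\infty(p)$ forces $p$ to be a finite sum of atom measures. The argument would run as follows: if some set of positive measure is not an atom, split it into two sets of positive measure; iterate this splitting $n+2$ times to produce disjoint sets of positive measure. Once no such splitting is possible, a maximal finite disjoint family of atoms must exhaust the mass of $p$. Some care is also needed in defining the values $f_i(A_j)$, but the paper has already fixed that convention.
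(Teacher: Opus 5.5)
Your proposal is correct and follows essentially the same route as the paper. In both, a bounded perturbation with vanishing moments bounds the number of disjoint sets of positive $p$-mass by $n+1$; a maximal such family then consists of atoms that exhaust the mass of $p$ and have linearly independent vectors $u_j=\alpha_j v_j$. Your (ii)$\Rightarrow$(iii) is the same constancy-on-atoms argument, and the remaining implications are likewise quoted from Proposition~\ref{prop:extreme:minimal} and Theorem~\ref{thm:minimal:divergence}. The only cosmetic difference is that you phrase the key step as injectivity of the moment map $T$ on all of $L^\infty(p)$, via the bounded-density criterion. The paper instead works directly with $h=\sum_j c_j\one_{A_j}$ and exhibits $p=\frac12(p_++p_-)$.
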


\begin{proof}
\ref{it:moment:extreme}$\,\Rightarrow\,$\ref{it:moment:atomic}:\ Suppose first that $p\in\cM$ admits $m\in \N$ pairwise disjoint measurable sets $A_1,\ldots,A_{m}\in \cF$ with $p(A_j)>0$, for $j=1,\ldots, m$, such that the vectors
\[
u_j:=\bigg(p(A_j),\int_{A_j}f_1\,\d p,\ldots,\int_{A_j}f_n\,\d p\bigg)\in\R^{n+1},\quad \text{for }j=1,\ldots,m,
\]
 are linearly dependent in $\R^{n+1}$.\ Clearly, these vectors will always be linearly dependent if $m\geq n+2$.\ Then, there exist coefficients $c_1,\ldots,c_{m}\in \R$, not all zero, with $\sum_{j=1}^{m} c_j u_j=0$.\ Then, $h:=\sum_{j=1}^{m} c_j\one_{A_j}$ is bounded by $\max\{|c_1|,\ldots,|c_{m}|\}$ and not $p$-a.s.\ equal to zero, but
\[
\int_\Omega h\,\d p=0
\quad\text{and}\quad
\int_\Omega f_i h\,\d p=0\quad\text{for }i=1,\ldots,n.
\]
For $0<\varepsilon \leq \big(\max\{|c_1|,\ldots, |c_{m}|\}\big)^{-1}$, the measures
\[
\d p_\pm:=(1\pm\varepsilon h)\,\d p
\]
are therefore distinct elements of $\cM$ and satisfy $p=\tfrac12(p_++p_-)$. Hence, $p$ is not extreme.\

Now, let $p\in \cM$ be an extreme point.\ Then, by the previous part of the proof, there exists a family $A_1,\ldots, A_m$ of pairwise disjoint sets with positive $p$-mass of maximal cardinality $m\leq n+1$.\ The maximality of the family $A_1,\ldots, A_m$ implies that $A_j$ is a $p$-atom, for $j=1,\ldots, m$, and 
\[
\sum_{j=1}^mp(A_j)=p\bigg(\bigcup_{j=1}^mA_j\bigg)=1,
\]
so that $p=\sum_{j=1}^m\alpha_j\, p_{A_j}$ with $\alpha_j:=p(A_j)>0$ for $j=1,\ldots, m$.\ Moreover, since $f_i$ is $p_{A_j}$-a.s.\ equal to $f_i(A_j)$ for $i=1,\ldots, n$, we have
\[
u_j=\bigg(p(A_j),\int_{A_j}f_1\,\d p,\ldots,\int_{A_j}f_n\,\d p\bigg)=\alpha_j v_j\quad \text{for } j=1,\ldots, m.
\]
Since $p$ is an extreme point, the vectors $u_1,\ldots, u_m$ are linearly independent, and, since $\alpha_1,\ldots, \alpha_m>0$, so are the vectors $v_1,\ldots, v_m$.

\noindent\ref{it:moment:atomic}$\,\Rightarrow\,$\ref{it:moment:minimal}:\ Let $p'\in\cM$ with $p'\ll p$.\ Since $A_1,\ldots, A_m$ are pairwise disjoint and  $p=\sum_{j=1}^m\alpha_j\, p_{A_j}$, we have $p(A_j)=\alpha_j$ for $j=1,\ldots, m$ and $\sum_{j=1}^m\alpha_j=1$, so that $p$ is concentrated on $\bigcup_{j=1}^mA_j$, and $p'\ll p$ implies that so is $p'$.\ Since $A_j$ is a $p$-atom, the density $\frac{\d p'}{\d p}$ is $p$-a.s.\ equal to a constant $c_j\in[0,\infty)$ on $A_j$, so that
\[
p'(B\cap A_j)=\int_{B\cap A_j}\frac{\d p'}{\d p}\,\d p=c_j\, p(B\cap A_j)=\beta_j\, p_{A_j}(B),\quad\text{for } B\in\cF,
\]
with $\beta_j:=c_j\alpha_j$ for $j=1,\ldots, m$, and therefore $p'=\sum_{j=1}^m\beta_j\, p_{A_j}$.\ The constraints $\int_\Omega f_i\,\d p'=a_i=\int_\Omega f_i\,\d p$, for $i=1,\ldots, n$, then give
\[
\sum_{j=1}^m(\beta_j-\alpha_j)v_j=0,
\]
and linear independence yields $\beta_j=\alpha_j$ for all $j=1,\ldots, m$, so that $p'=p$.\ Hence, $p$ is $\ll$-minimal in $\cM$.

\noindent\ref{it:moment:minimal}$\,\Rightarrow\,$\ref{it:moment:extreme}:\ Every $\ll$-minimal element of $\cM$ is an extreme point of $\cM$ by Theorem~\ref{thm:extreme:divergence}.

\noindent The remaining equivalence \ref{it:moment:minimal}$\,\Leftrightarrow\,$\ref{it:moment:blowup} follows from Theorem~\ref{thm:minimal:divergence}, which completes the proof.
\end{proof}

In the case where the extreme points of $\cP$ are the Dirac measures,\footnote{We denote by $\delta_\omega$ the Dirac measure concentrated at $\omega\in \Omega$, i.e., for $A \in \cF$, $\delta_\omega(A) = 1$ if $\omega \in A$, and $\delta_\omega(A) = 0$ otherwise.} Theorem~\ref{thm:moment:extreme} yields the famous characterization of extreme points of sets of probability measures with generalized moment constraints by \cite{winkler1988extreme}, see also \cite{karr1983extreme}.\
We point out that the proofs in \cite{karr1983extreme} and \cite{winkler1988extreme} rely on a more elaborated functional-analytic result by \cite{douglas1964extremal, douglas1966extremal-II} about the relation between extremality of a measure and the density of a certain space of functions in the $L^p$-space of this measure.

\begin{corollary}[Winkler (1988)]\label{cor:winkler}
Assume that $\cI=\{1,\ldots,n\}$ and that the extreme points of $\cP$ are exactly the Dirac measures.\ Then, $p\in \cM$ is an extreme point of $\cM$ if and only if
\[
p=\sum_{j=1}^m\alpha_j\delta_{\omega_j}
\]
with $m\in \N$ satisfying $m\leq n+1$, $\alpha_1,\ldots, \alpha_m>0$, and pairwise distinct $\omega_1,\ldots, \omega_m\in \Omega$, such that the
vectors
\[
v_j:=\big(1,f_1(\omega_j),\ldots,f_n(\omega_j)\big)^\top\in\R^{n+1},
\qquad j=1,\ldots,m,
\]
are linearly independent.
\end{corollary}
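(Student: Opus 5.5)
The plan is to derive Corollary~\ref{cor:winkler} directly from Theorem~\ref{thm:moment:extreme}, by equivalence \ref{it:moment:extreme}$\,\Leftrightarrow\,$\ref{it:moment:atomic}. The only work is to translate ``$p$-atom'' into ``point mass'' under the hypothesis that $\ex\cP$ consists exactly of the Dirac measures.

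\emph{Necessity.} Let $p$ be extreme in $\cM$. Theorem~\ref{thm:moment:extreme} gives pairwise disjoint $p$-atoms $A_1,\ldots,A_m$ with $m\le n+1$ and $p=\sum_j\alpha_j p_{A_j}$, with $v_j$ linearly independent.
\begin{enumerate}
\item Each $p_{A_j}$ is $\{0,1\}$-valued, hence an extreme point of $\cP$. This holds because if $p_{A_j}=\theta r+(1-\theta)r'$ with $\theta\in(0,1)$, then $r,r'$ vanish wherever $p_{A_j}$ does and equal $1$ wherever it does.
\item By hypothesis, $p_{A_j}=\delta_{\omega_j}$ for some $\omega_j\in\Omega$.
\item Since $p_{A_j}(A_j)=1$, we have $\omega_j\in A_j$. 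Disjointness of the $A_j$ then makes the $\omega_j$ pairwise distinct.
\item Because $f_i$ equals $f_i(A_j)$ $p_{A_j}$-a.s. and $p_{A_j}=\delta_{\omega_j}$, we get $f_i(A_j)=f_i(\omega_j)$. So the vectors $v_j$ in the corollary coincide with those in Theorem~\ref{thm:moment:extreme}.
\end{enumerate}

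\emph{Sufficiency.} Let $p=\sum_j\alpha_j\delta_{\omega_j}$ with the stated properties. I need $p$-atoms $A_j$ to feed back into \ref{it:moment:atomic}. The point I expect to be the main obstacle is that singletons need not be measurable. The plan is to pick measurable sets separating the finitely many distinct points.
\begin{enumerate}
\item For each pair $j\neq k$, show there is $B\in\cF$ with $\omega_j\in B$ and $\omega_k\notin B$. If no such $B$ existed, then every measurable set containing $\omega_j$ would contain $\omega_k$; applied to complements, the converse holds as well. Hence $\delta_{\omega_j}=\delta_{\omega_k}$ as measures. Then the $j$-th and $k$-th $v$'s coincide (their entries $f_i(\omega)$ agree by measurability, since $f_i$ is constant on such indistinguishable points), contradicting linear independence.
\item Intersect these separating sets to obtain measurable $A_j\ni\omega_j$ that exclude all $\omega_k$ with $k\neq j$.
\item Replace $A_j$ by $A_j\setminus\bigcup_{k<j}A_k$ to make the sets disjoint while keeping $\omega_j\in A_j$.
\end{enumerate}
Then $p(A_j)=\alpha_j>0$. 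Any $B\subseteq A_j$ has $p(B)=\alpha_j\one_B(\omega_j)\in\{0,\alpha_j\}$, so each $A_j$ is a $p$-atom, and $p_{A_j}=\delta_{\omega_j}$. Also $f_i(A_j)=f_i(\omega_j)$, so \ref{it:moment:atomic} holds and $p$ is extreme by Theorem~\ref{thm:moment:extreme}.

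The bound $m\le n+1$ is automatic in both directions, since the $v_j$ are linearly independent vectors in $\R^{n+1}$.
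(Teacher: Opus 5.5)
Your proposal is correct and follows the paper's proof: both directions come from the equivalence of items (i) and (ii) in Theorem~\ref{thm:moment:extreme}, using $p_{A_j}=\delta_{\omega_j}$ and $f_i(A_j)=f_i(\omega_j)$. The only difference is cosmetic and lies in the sufficiency direction. The paper takes $A_j:=\bigcap_{i=1}^n f_i^{-1}(\{f_i(\omega_j)\})$ directly, and these sets are disjoint because linear independence forces $v_j\neq v_k$; you instead obtain separating sets by contradiction, using the same measurability of these level sets. You also state explicitly, in the necessity direction, why the $\omega_j$ are pairwise distinct, which the paper leaves implicit.
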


\begin{proof}
Let $p$ be an extreme point of $\cM$.\ The representation $p=\sum_{j=1}^m\alpha_j\delta_{\omega_j}$ with linearly independent vectors $v_1,\ldots, v_m$ follows from Theorem~\ref{thm:moment:extreme} since $p_{A_j}$ is an extreme point of $\cP$, so that there exist $\omega_j\in \Omega$ with $p_{A_j}=\delta_{\omega_j}$ for $j=1,\ldots, m$, which implies that $f_i(A_j)=f_i(\omega_j)$ for $i=1, \dots, n$ and $j = 1, \dots, m$.

Conversely, let $p=\sum_{j=1}^m\alpha_j\delta_{\omega_j}$ with pairwise distinct $\omega_1,\ldots, \omega_m\in \Omega$ and linearly independent vectors $v_1,\ldots, v_m$.\ For all $j,k=1,\ldots, m$ with $j\neq k$, linear independence of $v_1,\ldots, v_m$ implies $v_j\neq v_k$, and therefore  $f_i(\omega_j)\neq f_i(\omega_k)$ for some $i\in\{1,\ldots, n\}$.\ Consequently, the sets 
\[
A_j:=\bigcap_{i=1}^n f_i^{-1}\big(\{f_i(\omega_j)\}\big)\in \cF, \qquad j=1,\ldots, m,
\]
are pairwise disjoint with $\delta_{\omega_j}(A_j)=1$, which implies that $A_j$ is a $p$-atom with $p_{A_j}=\delta_{\omega_j}$ and $f_i(A_j)=f_i(\omega_j)$ for all $i=1,\ldots, n$ and $j=1,\ldots, m$. Hence, condition \ref{it:moment:atomic} in Theorem~\ref{thm:moment:extreme} is satisfied, and $p$ is an extreme point of $\cM$.

\end{proof}

\subsection{Multi-marginal couplings} \label{sec:couplings}
Let $(\Omega_i, \cF_i, p_i)_{i = 1, \dots, n}$ be a family of probability spaces. Let $\Omega = \Omega_1 \times \dots \times \Omega_n$ be endowed with the product $\sigma$-algebra $\cF$, and consider
\[
\Pi(p_1, \dots, p_n) := \big\{ p \in \cP \,\big|\, p \circ \operatorname{pr}_i^{-1} = p_i \text{ for all } i = 1, \dots, n\big\},
\]
where $\operatorname{pr}_i\colon \Omega\to \Omega_i$ denotes the $i$-th coordinate projection for $i=1,\ldots, n$. The set $\Pi(p_1, \dots, p_n)$ therefore describes the set of all probability measures $p$ on $(\Omega, \cF)$ whose marginals on $\Omega_i$ coincide with $p_i$, also known as the set of \emph{multi-marginal couplings} of $p_1, \dots, p_n$, see, e.g., \cite{zbMATH06527378} for an overview on multi-marginal optimal transport.\ Note that this setup presents a special instance of integral constraints by considering, for each $i \in \{1, \dots, n\}$ and $A_i \in \cF_i$, the indicator function $f_{(i,A_i)} := \one_{\operatorname{pr}_i^{-1}(A_i)}$ and value $a_{(i,A_i)} := p_i(A_i)$.

If $\cF_i$ is generated by a countable $\pi$-system $\mathcal{G}_i=\{A_i^k\mid k\in\N\}$ for $i=1,\ldots, n$, we only have countably many integral constraints
\[
\int_\Omega \one_{\operatorname{pr}_i^{-1}(A_i^k)}\,\d p'
=p_i(A_i^k),
\qquad i=1,\ldots,n,\quad k\in\N.
\]
Indeed, equality of the two marginals on $\mathcal{G}_i$ extends to equality on $\cF_i$ by the $\pi$--$\lambda$ theorem.\ Moreover, if the generators are finite, the set of multi-marginal couplings is therefore a finite moment set to which Theorem~\ref{thm:moment:extreme}
applies.\ 

In the finite-state setting, Theorem~\ref{thm:finite:divergence} allows us to obtain the following alternative characterization, cf.\ \citet[Theorem 4]{bauch2025correlationuncertaintydecisiontheoreticapproach} for the case $q=p_1\otimes\cdots\otimes p_n$.

\begin{corollary}
    Let $\Omega_1,\ldots, \Omega_n$ be finite sets, each carrying its power set as a $\sigma$-algebra.\ Moreover, let $\cM:=\Pi(p_1,\ldots,p_n)$ be the set of multi-marginal couplings of $p_1,\ldots,p_n$, let $q\in \cM$ with $p'\ll q$ for all $p'\in \cM$, and let $\varphi\colon[0,\infty)\to\R$ be a convex function satisfying $\varphi(1)=0$ and $\varphi_+'(0)=-\infty$.\ Then, for $p\in\cM$, the following statements are equivalent.
    \begin{enumerate}
        \item $p$ is an extreme point of $\cM$.
        \item $p$ is a strict local maximizer of the function
        \begin{equation}
            \cM \to \R,\quad p' \mapsto \Div{\varphi}{p'}{q}.
        \end{equation}
    \end{enumerate}
\end{corollary}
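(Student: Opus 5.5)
This corollary is a direct specialization of Theorem~\ref{thm:finite:divergence}, so the plan is to check that $\cM=\Pi(p_1,\ldots,p_n)$ fits the framework $\cM=\cP\cap H$ with $\Omega$ finite. Then the equivalence \ref{it:finite:extreme}$\,\Leftrightarrow\,$\ref{it:finite:maximizer} of part b) gives the claim.

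\emph{Step 1: $\cM$ has the form $\cP\cap H$.} The product $\Omega=\Omega_1\times\cdots\times\Omega_n$ is finite, and its product $\sigma$-algebra is the power set, since every singleton $\{(\omega_1,\ldots,\omega_n)\}$ is a finite intersection of cylinder sets. Define
\[
H:=\big\{\mu \text{ finite signed measure on } \Omega \,\big|\, \mu\big(\operatorname{pr}_i^{-1}(\{\omega_i\})\big)=p_i(\{\omega_i\}) \text{ for all } i=1,\ldots,n \text{ and } \omega_i\in\Omega_i\big\}.
\]
This set is affine, being defined by finitely many linear equations. Moreover, $\cP\cap H=\Pi(p_1,\ldots,p_n)$, because on finite spaces a marginal is determined by its values on singletons. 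The set is nonempty since $p_1\otimes\cdots\otimes p_n\in\cM$.

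\emph{Step 2: apply the theorem.} The hypotheses of part b) of Theorem~\ref{thm:finite:divergence} are exactly those assumed here: $q\in\cM$ dominates every element of $\cM$, and $\varphi$ is convex with $\varphi(1)=0$ and $\varphi'_+(0)=-\infty$. Part a) of that theorem guarantees that such a $q$ exists. In the product case one can take $q=p_1\otimes\cdots\otimes p_n$: if $p'\in\cM$ charges a point $\omega$, then $p_i(\{\omega_i\})\ge p'(\{\omega\})>0$ for every $i$, so $q(\{\omega\})>0$. Invoking \ref{it:finite:extreme}$\,\Leftrightarrow\,$\ref{it:finite:maximizer} of the theorem then completes the proof.

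There is no real obstacle here. The only point needing care is the verification that $\Pi(p_1,\ldots,p_n)$ is the intersection of $\cP$ with an affine set of finite signed measures, which Step 1 handles directly.
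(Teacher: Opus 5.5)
Your proposal is correct and follows the paper's approach. The paper gives no separate proof: the corollary is a direct specialization of Theorem~\ref{thm:finite:divergence}\,b) to $\cM=\Pi(p_1,\ldots,p_n)=\cP\cap H$ on the finite product space, and your Step~1 supplies the routine check that the marginal constraints define such an affine $H$, which the paper handles by remarking that couplings are an instance of indicator-function integral constraints. Your observation that $q=p_1\otimes\cdots\otimes p_n$ dominates every element of $\cM$ is correct and matches the reference case the paper cites.
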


In order to partially recover Theorem \ref{thm:finite:divergence} for multi-marginal couplings with arbitrary measurable spaces $(\Omega_1, \cF_1),\ldots, (\Omega_n,\cF_n)$, we need to impose additional conditions.\ For this, we introduce the notion of marginal uniqueness sets, which has been used in the framework of bivariate couplings since \cite{letac1966representation}.
\begin{definition}
    We call a set $S\in\cF$ a \emph{marginal uniqueness set} for $\Pi(p_1,\ldots,p_n)$ if there is at most one element of $\Pi(p_1,\ldots,p_n)$ concentrated on $S$.
\end{definition}
In \cite{benes1987support,hestir1995supports,moameni2016supports}, the authors study the aperiodic structure of the supports of extreme points of bivariate couplings, and give conditions under which such a support is a marginal uniqueness set. We also refer to the collection \cite{dallaglio1991advances} for a dated but thorough discussion of this topic.

\begin{proposition}\label{prop:uniqueness:minimal}
    Let $(\Omega_i,\cF_i,p_i)_{i=1,\ldots, n}$ be a family of probability spaces,  $\cM:=\Pi(p_1,\ldots,p_n)$ be the set of multi-marginal couplings of $p_1,\ldots,p_n$, and $p\in\cM$ be concentrated on a marginal uniqueness set $S\in\cF$.\ Then $p$ is $\ll$-minimal in $\cM$.\ In particular, $p$ is an extreme point of $\cM$, and the blow-up \eqref{eq:blowup} of Theorem~\ref{thm:minimal:divergence} holds for every $p'\in\cM\setminus\{p\}$, every $q\in\cM$ with $p,p'\ll q$, and every convex function $\varphi$ satisfying \eqref{eq:phi:blowup}.
\end{proposition}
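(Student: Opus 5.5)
The plan is to prove $\ll$-minimality directly from the definition of a marginal uniqueness set, and then read off the remaining assertions from results already established. Let $p' \in \cM$ with $p' \ll p$; we must show $p' = p$. Since $p$ is concentrated on $S$, we have $p(S^{\rm c}) = 0$. Absolute continuity then gives $p'(S^{\rm c}) = 0$, so $p'$ is also concentrated on $S$. Both $p$ and $p'$ are elements of $\Pi(p_1,\ldots,p_n)$ concentrated on $S$. By the definition of a marginal uniqueness set, there is at most one such element, hence $p' = p$. Thus no $p' \in \cM \setminus \{p\}$ satisfies $p' \ll p$, which is exactly Definition~\ref{def:minimal}.

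For the consequences, first note that $\cM = \Pi(p_1,\ldots,p_n)$ is of the form $\cP \cap H$. Here $H$ is the affine set of finite signed measures $\mu$ on $(\Omega,\cF)$ with $\mu \circ \operatorname{pr}_i^{-1} = p_i$ for $i = 1,\ldots,n$. This set is affine because the marginal map is linear, and it is nonempty in $\cP$ since it contains $p$. Hence the general results of Section~\ref{sec:extreme} apply.

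Extremality follows from Theorem~\ref{thm:extreme:divergence}: $\ll$-minimality trivially implies condition~\ref{it:thm:bounded}, which excludes only dominated measures with bounded density. Alternatively, it follows from Proposition~\ref{prop:extreme:minimal}. The blow-up \eqref{eq:blowup} for every $p' \in \cM \setminus \{p\}$, every $q \in \cM$ with $p, p' \ll q$, and every $\varphi$ satisfying \eqref{eq:phi:blowup} is then precisely the implication \ref{it:minimal}$\,\Rightarrow\,$\ref{it:minimal:blowup} of Theorem~\ref{thm:minimal:divergence}.

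There is no real obstacle here. The only point requiring care is checking that the coupling set genuinely has the form $\cP \cap H$ with $H$ affine, so that the earlier theorems are applicable. The minimality argument itself is a one-line use of the inclusion $\{p = 0\} \subseteq \{p' = 0\}$ applied to the set $S^{\rm c}$.
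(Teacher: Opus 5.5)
Your proposal is correct and follows essentially the same argument as the paper: $p(\Omega\setminus S)=0$ and $p'\ll p$ force $p'$ to be concentrated on $S$, so the uniqueness property gives $p'=p$, and the remaining assertions follow from Proposition~\ref{prop:extreme:minimal} (equivalently Theorem~\ref{thm:extreme:divergence}) and Theorem~\ref{thm:minimal:divergence}. Your extra check that $\Pi(p_1,\ldots,p_n)$ has the form $\cP\cap H$ with $H$ affine is a harmless detail that the paper leaves implicit.
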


\begin{proof}
    Let $p'\in\cM$ with $p'\ll p$. Then $p(\Omega\setminus S)=0$ forces $p'(\Omega\setminus S)=0$, so $p'$ is concentrated on the marginal uniqueness set $S$. Since $p'\in\Pi(p_1,\ldots,p_n)$, the uniqueness property gives $p'=p$. Hence $p$ is $\ll$-minimal in $\cM$, and the remaining assertions follow from Theorem~\ref{thm:minimal:divergence}.
\end{proof}

We now collect some classes of couplings concentrated on marginal uniqueness sets, to which Proposition~\ref{prop:uniqueness:minimal} applies.

\begin{example}\label{ex:uniqueness}
\begin{enumerate}[wide, labelindent=0pt, label=\textup{(\roman*)}]
    \item \emph{Monge transport couplings.}\ A simple sufficient condition for the hypothesis of Proposition~\ref{prop:uniqueness:minimal} is that $p$ is concentrated on the graph of a measurable map. Suppose $T_2\colon\Omega_1\to\Omega_2,\ldots,T_n\colon\Omega_1\to\Omega_n$ are measurable and
    \[
        S:=\big\{(\omega_1,T_2(\omega_1),\ldots,T_n(\omega_1))\,\big\vert\,\omega_1\in\Omega_1\big\}\in\cF.
    \]
    Then, $S$ is a marginal uniqueness set for $\Pi(p_1,\ldots,p_n)$. As a matter of fact, any $p\in\Pi(p_1,\ldots,p_n)$ concentrated on $S$ satisfies, for all $A_i\in\cF_i$,
    \begin{align*}
        p(A_1\times\cdots\times A_n)&=p\big(\big(A_1\cap T_2^{-1}(A_2)\cap\cdots\cap T_n^{-1}(A_n)\big)\times\Omega_2\times\cdots\times\Omega_n\big)\\
        & =p_1\big(A_1\cap T_2^{-1}(A_2)\cap\cdots\cap T_n^{-1}(A_n)\big),
    \end{align*}
    so $p$ is determined by its first marginal $p_1$.\ In particular, every deterministic (Monge-type) coupling $p=(\mathrm{id},T_2,\ldots,T_n)_{\#}p_1$ with measurable graph satisfies the hypothesis of Proposition~\ref{prop:uniqueness:minimal}. For $n=2$, more general sufficient conditions on the support are available in the literature, see \cite{hestir1995supports} and \cite{moameni2016supports}, where unions of countably many graphs with a suitable acyclic structure are shown to be marginal uniqueness sets.\label{it:monge}
    
    \item \emph{Optimal transport couplings.} Let $\Omega_1=\Omega_2=\R^d$, and let $p_1,p_2$ be probability measures on $\R^d$ with finite second moments and such that $p_1$ is absolutely continuous with respect to the Lebesgue measure. By Brenier's theorem~\citep{brenier1991polar}, the optimal coupling of $p_1$ and $p_2$ for the quadratic cost is $p=(\mathrm{id},\nabla\eta)_{\#}p_1$ for a convex function $\eta\colon\Rd\to\R\cup\{+\infty\}$, which is differentiable on a set $D \subseteq \Rd$ with $p_1(D)=1$. Convexity of $\eta$ implies that $D$ is a Borel measurable set and that $\nabla \eta$ is Borel measurable. Therefore, the graph
    \[
        S:=\big\{\big(x,\nabla\eta(x)\big)\,\big\vert\, x\in D\big\}
    \]
    is a Borel subset of $\R^d\times\R^d$. Hence $p$ is a Monge transport coupling as in \ref{it:monge}, so $S$ is a marginal uniqueness set for $\Pi(p_1,p_2)$.\ Proposition~\ref{prop:uniqueness:minimal} then applies: $p$ is $\ll$-minimal in $\Pi(p_1,p_2)$, hence an extreme point, and Theorem~\ref{thm:minimal:divergence} yields the blow-up of every $\varphi$-divergence satisfying \eqref{eq:phi:blowup} with respect to every dominating measure $q$. More generally, for any cost of the form $c(x,y)=h(x-y)$ with $h$ strictly convex, \citet[Theorem 1.2]{gangbo1996geometry} yields a unique optimal plan concentrated on the graph of a map whenever $p_1$ is absolutely continuous with respect to the Lebesgue measure, which is again covered by~\ref{it:monge}.\label{it:brenier}

    \item \emph{Monotone sets in dimension one.} Let $\Omega_1 = \Omega_2 = \R$. Then every monotone Borel set $S\subseteq\R^2$, i.e., a set such that $(x-x')(y-y')\geq 0$ for all $(x,y),(x',y')\in S$, is a marginal uniqueness set, without any graph structure. As a matter of fact, a coupling concentrated on a monotone set coincides with the comonotone coupling, cf.\ \citet[Proposition 4.5]{ambrosio2021lectures}, which is uniquely determined by the marginals. In particular, Proposition~\ref{prop:uniqueness:minimal} applies to every coupling concentrated on a monotone set, regardless of the absolute continuity of the marginals.\label{it:monotone}
\end{enumerate}
\end{example}

The marginal-uniqueness-set hypothesis in Proposition~\ref{prop:uniqueness:minimal} is sufficient but not necessary for extremality.\ In fact, \citet{losert1982counterexamples} provides an example on the unit square of an extreme doubly stochastic measure $p$ and a distinct doubly stochastic measure $p'$ with $p'\ll p$. In the terminology of Definition~\ref{def:minimal}, this is precisely an extreme coupling that is not $\ll$-minimal. Such a measure cannot be concentrated on any marginal uniqueness set, because absolute continuity would force $p'$ to be concentrated on the same set and the uniqueness property would then imply $p'=p$.\ We therefore see that Proposition~\ref{prop:uniqueness:minimal} does not characterize all extreme couplings.

\subsection{Dominated sets of measures} For a fixed probability measure $q\in \cP$, we consider the set of all probability measures that are dominated by $q$, i.e.,
\begin{align*}
    \cP_q = \{ p \in \cP \, \mid \, p\ll q \}.
\end{align*}
Denoting by $\cL_q$ the set of all signed measures with finite variation that are absolutely continuous with respect to $q$, we have $\cP_q=\cP\cap\cL_q$.\ Note that $\cL_q$ is a linear subspace of the space of signed measures with finite variation, therefore the set $\cP_q$ is of the form covered by this paper.

The extreme points of this set are determined exactly by the atoms of the dominating measure.\ For a $q$-atom $A\in \cF$, we denote by $q_A$ the measure defined as in \eqref{eq:atom:measure}.

\begin{proposition}\label{prop:dominated}
The extreme points of $\cP_q$ are precisely the measures $q_A$, where $A$ ranges over all $q$-atoms.\ In particular, $\cP_q$ has no extreme points if $q$ is atomless.
\end{proposition}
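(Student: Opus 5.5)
We prove the two inclusions separately, using the criterion of Proposition~\ref{prop:extreme:minimal}: $p\in\cP_q$ is extreme if and only if no $p'\in\cP_q\setminus\{p\}$ satisfies $p'\ll p$ with $p$-a.s.\ bounded density.

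\emph{Every $q_A$ is extreme.} Let $A$ be a $q$-atom. First, $q_A\ll q$, so $q_A\in\cP_q$. Suppose $p'\in\cP_q$ with $p'\ll q_A$. Then $p'(A^{\rm c})=0$, since $q_A(A^{\rm c})=0$. Next, $q_A$ is $\{0,1\}$-valued: for $B\in\cF$ we have $q(A\cap B)\in\{0,q(A)\}$, hence $q_A(B)\in\{0,1\}$.

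Fix $B\in\cF$ and distinguish two cases.
\begin{itemize}
\item If $q(A\cap B)=0$, then $q_A(B)=0$, so $p'(B)=p'(A\cap B)=0=q_A(B)$.
\item If $q(A\cap B)=q(A)$, then $q(A\setminus B)=0$. Hence $q_A(B^{\rm c})=0$, so $p'(B^{\rm c})=0$ and $p'(B)=1=q_A(B)$.
\end{itemize}
Thus $p'=q_A$, and $q_A$ is even $\ll$-minimal in $\cP_q$. By Theorem~\ref{thm:extreme:divergence}, or directly, $q_A$ is extreme.

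\emph{Every extreme point is of the form $q_A$.} Let $p\in\ex\cP_q$. We first show that $p$ is $\{0,1\}$-valued. Suppose some $B\in\cF$ has $p(B)\in(0,1)$. Set $p':=p(\cdot\cap B)/p(B)$. Then $p'\in\cP_q$, $p'\ll p$, and $\frac{\d p'}{\d p}=\one_B/p(B)$ is bounded. Moreover $p'\neq p$, because $p'(B)=1\neq p(B)$. This contradicts Proposition~\ref{prop:extreme:minimal}, so $p(B)\in\{0,1\}$ for all $B\in\cF$.

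It remains to produce a $q$-atom $A$ with $p=q_A$. This is the main step, since $p$ is only assumed $\{0,1\}$-valued and absolutely continuous, not given as a restriction of $q$. Let $h:=\frac{\d p}{\d q}$.
\begin{itemize}
\item Since $p$ is $\{0,1\}$-valued and $\{h>t\}\in\cF$, each set $\{h>t\}$ has $p$-mass $0$ or $1$. The function $t\mapsto p(\{h>t\})$ is non-increasing, equals $1$ at $t=0$ (because $p(\{h=0\})=0$), and tends to $0$ as $t\to\infty$. Setting $c:=\sup\{t\mid p(\{h>t\})=1\}$, continuity of measure from above and below gives $p(\{h=c\})=1$. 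Hence $h$ is $p$-a.s.\ equal to a constant $c\in(0,\infty)$.
\item Let $A:=\{h=c\}$. Then $1=p(A)=c\,q(A)$, so $q(A)=1/c>0$, and for every $B\in\cF$ we get $p(B)=p(A\cap B)=c\,q(A\cap B)=q_A(B)$.
\item Finally, $A$ is a $q$-atom. For $B\subseteq A$ we have $q(B)=p(B)/c\in\{0,1/c\}=\{0,q(A)\}$.
\end{itemize}

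The last assertion follows at once: if $q$ is atomless there are no $q$-atoms, so $\cP_q$ has no extreme points.
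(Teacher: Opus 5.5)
Your proof is correct and follows the paper's argument. Both proofs use $\ll$-minimality of $q_A$ for one inclusion, and for the other use the conditional measure $p(\cdot\cap B)/p(B)$ (which has bounded density) to force $p$ to be $\{0,1\}$-valued before identifying $p$ as $q_A$. The difference is only in the last step: the paper takes $A=\{\frac{\d p}{\d q}>0\}$, shows it is a $q$-atom, and then invokes a.s.\ constancy of measurable functions on atoms, whereas you prove $p$-a.s.\ constancy of the density directly via the level sets $\{h>t\}$ and take $A=\{h=c\}$; this makes your version self-contained, and you also verify the claim $p'\ll q_A\Rightarrow p'=q_A$, which the paper only asserts.
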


\begin{proof}
If $A\in \cF$ is a $q$-atom, every probability measure $p'\ll q_A$ equals $q_A$. Hence, condition~\ref{it:thm:bounded} in Theorem~\ref{thm:extreme:divergence} shows that $q_A$ is extreme in $\cP_q$.

Conversely, let $p\in\cP_q$ be extreme.\ If there was a set $B\in\cF$ with $0<p(B)<1$, then the conditional probability $p':=\frac{p(\,\cdot\;\cap B)}{p(B)}$ would belong to $\cP_q\setminus\{p\}$ with bounded density
\[
\frac{\d p'}{\d p}=\frac{\one_B}{p(B)},
\]
contradicting item~\ref{it:thm:bounded} in Theorem~\ref{thm:extreme:divergence}. Thus, $p$ is $\{0,1\}$-valued.

Now, let $f:=\d p/\d q$ and $A:=\{f>0\}$, so that $p(A)=1$.\ For every measurable $B\subseteq A$, either $p(B)=0$, in which case $q(B)=0$, or $p(B)=1$, in which case $q(A\setminus B)=0$.\ Therefore, the set $A$ is a $q$-atom.\ Since every measurable function is $q$-a.s.\ constant on a $q$-atom and $\int_A f\,\d q=1$, one has $f=\frac{\one_A}{q(A)}$ $q$-a.s., and therefore $p=q_A$.
\end{proof}

More generally, we can again consider the intersection of the set $\cP_q$ with an affine set $H$ of signed measures with finite variation, which is again assumed to be nonempty.\ Observe that $H\cap\cL_q$ is still affine and
\[
\cP_q\cap H=\cP\cap\big(H\cap\cL_q\big)=(\cP\cap H)\cap \cP_q.
\]
 The constraint of absolute continuity does not create new extreme points as the following proposition shows.

\begin{proposition}\label{prop:dominated:intersection}
Let $q\in\cP$ and $H$ be an affine set of signed measures with finite variation.\ Then
\[
\ex\big(\cP_q\cap H\big)=\ex\big(\cP\cap H\big)\cap\cP_q.
\]
Moreover, $p\in\cP_q\cap H$ is $\ll$-minimal in $\cP_q\cap H$ if and only if it is $\ll$-minimal in $\cP\cap H$.
\end{proposition}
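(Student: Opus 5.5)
The key observation is that absolute continuity is hereditary downward. If $p\in\cP_q\cap H$ and $p'\ll p$, then $p'\ll q$, so $p'\in\cP_q$ as soon as $p'\in\cP$. Consequently, for any $p\in\cP_q\cap H$, the two sets
\[
\{p'\in\cP\cap H\mid p'\ll p\}\quad\text{and}\quad\{p'\in\cP_q\cap H\mid p'\ll p\}
\]
coincide. Both characterizations in the statement depend only on measures dominated by $p$, so they will transfer directly between the two sets. I would apply the criterion \ref{it:thm:bounded} of Theorem~\ref{thm:extreme:divergence} (equivalently Proposition~\ref{prop:extreme:minimal}) to both $\cM_1:=\cP\cap H$ and $\cM_2:=\cP_q\cap H=\cP\cap(H\cap\cL_q)$. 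Both are of the form required by the paper: $H\cap\cL_q$ is affine, and it is nonempty whenever $\cM_2$ is.

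For the extreme points, take $p\in\cP_q\cap H$. By Theorem~\ref{thm:extreme:divergence}, $p\in\ex(\cM_2)$ if and only if there is no $p'\in\cM_2\setminus\{p\}$ with $p'\ll p$ and bounded $\d p'/\d p$. By the observation above, this is the same as the absence of such a $p'$ in $\cM_1\setminus\{p\}$, which in turn is equivalent to $p\in\ex(\cM_1)$. Combining this with $\ex(\cM_2)\subseteq\cM_2\subseteq\cP_q$ yields $\ex(\cM_2)=\ex(\cM_1)\cap\cP_q$. The inclusion $\supseteq$ also follows directly from the elementary fact that an extreme point of a convex set is extreme in every convex subset containing it, so only $\subseteq$ really needs the density criterion.

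For $\ll$-minimality, I would use Definition~\ref{def:minimal} verbatim. The element $p$ fails to be $\ll$-minimal in $\cM_i$ exactly when the set $\{p'\in\cM_i\setminus\{p\}\mid p'\ll p\}$ is nonempty, and these sets coincide for $i=1,2$ by the transitivity of absolute continuity.

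I do not expect a genuine obstacle here. The only points that need care are checking that $\cM_2$ fits the framework of Section~\ref{sec:extreme}, namely that $H\cap\cL_q$ is affine and consists of finite-variation signed measures, and being explicit that $p\ll q$ together with $p'\ll p$ gives $p'\ll q$.
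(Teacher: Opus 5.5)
Your proposal is correct and follows essentially the same route as the paper. The paper likewise notes that $p'\ll p\ll q$ puts every competitor $p'$ for $p$ inside $\cP_q\cap H$, concludes that criterion \ref{it:thm:bounded} of Theorem~\ref{thm:extreme:divergence} holds for $\cP_q\cap H$ exactly when it holds for $\cP\cap H$, and handles $\ll$-minimality by the same observation.
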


\begin{proof}
Let $p\in\cP_q\cap H$.\ Then, for $p'\in\cP\cap H$ with $p'\ll p$ and $p$-a.s.\ bounded density $\frac{\d p'}{\d p}$, it follows that $p'\ll p\ll q$, so that $p'\in\cP_q\cap H$.\ Conversely, every $p'\in\cP_q\cap H$ belongs to $\cP\cap H$.\ Hence, condition~\ref{it:thm:bounded} in Theorem~\ref{thm:extreme:divergence} holds for $p'\in \cP_q\cap H$ if and only if it holds for $p'\in \cP\cap H$, which gives the first assertion.\ The second assertion follows from the same observations.
\end{proof}

\begin{remark}\label{rem.atomless}
Recall that if $q\in \cP$ is atomless and $p\in \cP_q$, it follows that $p$ is atomless.\ Indeed, let $q\in \cP$ be atomless, $p\in \cP_q$, and assume that $A\in \cF$ is a $p$-atom.\ Then the density $\frac{\d p}{\d q}$ is $p$-a.s.\ constant on $A$, i.e., there exists $c> 0$ such that
\[
 p\bigg(A\cap \bigg\{\frac{\d p}{\d q}=c\bigg\}\bigg)=p(A).
\]
Otherwise, there would exist $ c\geq 0$ with
\[
 p\bigg(A\cap \bigg\{\frac{\d p}{\d q}\leq  c\bigg\}\bigg)>0\quad \text{and}\quad p\bigg(A\cap \bigg\{\frac{\d p}{\d q}> c\bigg\}\bigg)>0,
\]
which would contradict the fact that $A$ is a $p$-atom.\
Since $p\ll q$, the set $A_0:=A\cap \big\{\frac{\d p}{\d q}=c\big\}$ therefore has positive $q$-measure, and since $q$ is atomless, there exists $B\in \cF$ with $B\subset A_0$ and $0<q(B)<q(A_0)$, which implies
$$
0<cq(B)=p(B)\quad \text{and}\quad p(B)=cq(B)<cq(A_0)=p(A_0)= p(A),
$$
contradicting the fact that $A$ is a $p$-atom. 

This observation allows to deduce the following result from Theorem~\ref{thm:moment:extreme} and Proposition~\ref{prop:dominated:intersection}.
\end{remark}

\begin{corollary}\label{cor:atomless}
Let $q\in\cP$ be atomless and $H$ be the affine set determined by $n\in\N$ integral constraints of the form $\int_\Omega f_i\,\d p=a_i$ with measurable functions $f_i \colon \Omega \to \R$ and $a_i \in \R$ for $i=1,\ldots, n$.\ Then $\cP_q\cap H$ has no extreme points.
\end{corollary}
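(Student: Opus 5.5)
The plan is to argue by contradiction: suppose $p\in\ex(\cP_q\cap H)$ and show that this forces $p$ to have an atom, which is impossible when $q$ is atomless.

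First I would transfer extremality to the undominated set. Since $H$ is determined by the integral constraints $\int_\Omega f_i\,\d p=a_i$ for $i=1,\ldots,n$, the set $\cP\cap H$ is exactly the moment set $\cM$ of Section~\ref{sec:integral} with $\cI=\{1,\ldots,n\}$. The integrability requirement $f_i\in L^1(p)$ is implicit in the constraint being meaningful, and I would interpret $H$ accordingly. Proposition~\ref{prop:dominated:intersection} gives
\[
\ex(\cP_q\cap H)=\ex(\cP\cap H)\cap\cP_q,
\]
so $p$ is an extreme point of $\cM$ and lies in $\cP_q$.

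Next I would apply Theorem~\ref{thm:moment:extreme}. By the implication \ref{it:moment:extreme}$\,\Rightarrow\,$\ref{it:moment:atomic}, there exist $m\in\N$ with $m\ge1$, weights $\alpha_j>0$, and pairwise disjoint $p$-atoms $A_1,\ldots,A_m$ such that $p=\sum_j\alpha_j p_{A_j}$. In particular $p$ has at least one atom $A_1$.

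Finally I would invoke Remark~\ref{rem.atomless}: since $q$ is atomless and $p\ll q$, the measure $p$ is atomless. This contradicts the existence of $A_1$, so no extreme point exists.

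The only step needing care is the identification of $\cP\cap H$ with the moment set $\cM$, in particular the role of the integrability condition. Once that is settled, the rest is a direct chain of earlier results. The conclusion also holds vacuously if $\cP_q\cap H$ is empty.
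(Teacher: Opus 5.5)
Your proposal is correct and follows essentially the same route as the paper. The paper likewise transfers extremality to $\cP\cap H$ via Proposition~\ref{prop:dominated:intersection} and extracts an atom from Theorem~\ref{thm:moment:extreme}, which contradicts the atomlessness of $p$ established in Remark~\ref{rem.atomless}; your remark on the integrability condition in identifying $\cP\cap H$ with the moment set is a reasonable extra check that the paper leaves implicit.
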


\begin{proof}
Since $q$ is atomless, every $p\in\cP_q$ is atomless, cf.\ Remark \ref{rem.atomless}.\ By Theorem~\ref{thm:moment:extreme}, every extreme point of $\cP\cap H$ is a finite convex combination of measures $p_{A_j}$ associated with $p$-atoms $A_j$, and therefore admits atoms.\ Proposition~\ref{prop:dominated:intersection} then yields that $\cP_q\cap H$ has no extreme points.
\end{proof}

We conclude this section with an example from mathematical finance motivating an extension to the framework of sets of equivalent measures.\
For a one-period market $(\Omega,\cF,q)$ with a $q$-integrable payoff vector $X=(X_1,\ldots,X_d)$, by the fundamental theorem of asset pricing, leaving aside technicalities, no-arbitrage is equivalent to the nonemptiness of the set of equivalent martingale measures
\[
\cM
:=\bigg\{p\in\cP\,\bigg|\,p\approx q\text{ and } X_i\in L^1(p)\text{ with }\int_\Omega X_i\,\d p=0 \text{ for }i=1,\ldots, d\bigg\}.
\]
This set is, in general, not an intersection of $\cP$ with an affine set, so that the results of Section~\ref{sec:extreme} do not apply to it directly.\ Nevertheless, its extreme points can be traced back to those of $\cP_q\cap H$, as the following proposition shows.

\begin{proposition}\label{prop:equivalent}
Let $q\in\cP$, $H$ be an affine set of signed measures with finite variation, and
\[
\cM:=\{p\in\cP_q\cap H\mid p\approx q\}.
\]
Then, every extreme point of $\cM$ is an extreme point of $\cP_q\cap H$.
\end{proposition}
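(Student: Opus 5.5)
We argue by contraposition, using the bounded-density criterion from Proposition~\ref{prop:extreme:minimal} (equivalently, item~\ref{it:thm:bounded} of Theorem~\ref{thm:extreme:divergence}) applied to the set $\cP_q\cap H=\cP\cap(H\cap\cL_q)$, which is of the form considered in Section~\ref{sec:extreme}. So let $p\in\cM$ and suppose $p$ is not an extreme point of $\cP_q\cap H$. We aim to show that $p$ is not an extreme point of $\cM$.

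By Proposition~\ref{prop:extreme:minimal}, there exists $p'\in(\cP_q\cap H)\setminus\{p\}$ with $p'\ll p$ and $\frac{\d p'}{\d p}\le C$ $p$-a.s.\ for some $C\ge 1$. The measure $p'$ itself need not be equivalent to $q$, so we replace it by a mixture. Set $\eps:=\frac{1}{2C}$ and define
\[
p_\pm:=p\pm\eps(p'-p).
\]
Both measures lie in $H$, since $p,p'\in H$ and $H$ is affine, and both are dominated by $q$. Their densities with respect to $p$ are
\[
1\pm\eps\Big(\frac{\d p'}{\d p}-1\Big).
\]
For the plus sign the density is at least $1-\eps\ge \tfrac12$. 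For the minus sign it is at least $1-\eps C=\tfrac12$. Hence both densities are $p$-a.s.\ bounded below by $\tfrac12$ and above by a constant, so $p_\pm\in\cP$ and $p_\pm\approx p$. Since $p\approx q$, this gives $p_\pm\approx q$, and therefore $p_\pm\in\cM$.

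Finally, $p=\tfrac12(p_++p_-)$, and $p_+\neq p_-$ because $p'\neq p$. Thus $p$ is not extreme in $\cM$, which proves the contrapositive and hence the proposition.

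The only genuinely delicate point is ensuring that the perturbed measures stay \emph{equivalent} to $q$ rather than merely dominated by it. This is why the step size is taken as $\eps=\frac{1}{2C}$ instead of $\frac1C$ as in the proof of Proposition~\ref{prop:extreme:minimal}: the strict positive lower bound $\tfrac12$ on the densities keeps the full support of $p$.
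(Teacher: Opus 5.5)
Your proof is correct and follows the paper's argument essentially verbatim. You use contraposition via the bounded-density criterion applied to $\cP\cap(H\cap\cL_q)$, then the symmetric perturbation $p\pm\eps(p'-p)$ with $\eps$ small relative to $C$, checking that both densities stay strictly positive so the perturbed measures remain equivalent to $q$. The paper reads the densities against $q$ and you read them against $p$, which is immaterial; as a minor aside, your closing remark overstates the need for $\eps=\frac{1}{2C}$, since with $\eps=\frac1C$ the two $p$-densities are still bounded below by $1-\frac1C$ and $\frac1C$, both positive because $p'\neq p$ forces $C>1$.
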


\begin{proof}
Let $p\in\cM$ and suppose that $p$ is not an extreme point of $\cP_q\cap H$.\ By Theorem~\ref{thm:extreme:divergence}, there exists $p'\in(\cP_q\cap H)\setminus\{p\}$ with $\frac{\d p'}{\d p}\leq C$ $p$-a.s.\ for some constant $C\geq 1$.\ For $\varepsilon\in(0,1)$ with $\varepsilon(C-1)<1$, consider
\[
p_\pm:=(1\pm\varepsilon)p\mp\varepsilon p'.
\]
Since the coefficients sum to one, $p_\pm\in H$, and $p=\frac12(p_++p_-)$.\ Writing $f:=\frac{\d p}{\d q}$ and $f':=\frac{\d p'}{\d q}$, the measure $p_-$ has $q$-density $(1-\varepsilon)f+\varepsilon f'$, which is strictly positive $q$-a.s.\ because $p\approx q$ implies $f>0$ $q$-a.s. Similarly, $p_+$ has $q$-density
\[
(1+\varepsilon)f-\varepsilon f'=f\bigg((1+\varepsilon)-\varepsilon\frac{\d p'}{\d p}\bigg)>0\quad q\text{-a.s.}
\]
by the choice of $\varepsilon$.\ Hence, $p_-$ and $p_+$ are distinct probability measures equivalent to $q$, yielding $p_\pm\in\cM$, and therefore $p$ is not an extreme point of $\cM$.
\end{proof}

Coming back to the one-period market and the set of equivalent martingale measures $\cM$, we note that it is of the form
\[
\cM = \{p \in \cP_q \cap H \mid p \approx q\},
\]
where $H$ denotes the affine set determined by the constraints $\int_\Omega X_i\,\d p=0$ for $i=1,\ldots, d$.\ Now, if $q$ is atomless, Corollary~\ref{cor:atomless} shows that $\cP_q\cap H$ has no extreme points, and, therefore, Proposition~\ref{prop:equivalent} yields that the set of equivalent martingale measures $\cM$ has no extreme points either.

\subsubsection*{Acknowledgements}
The first-named author gratefully acknowledges financial support by the German Research Foundation (DFG) via RTG2865/1 -- 492988838.\ The second-named author was funded by the Natural Sciences and Engineering Research Council of Canada via Discovery Grant no.\ RGPIN-2025-04219.

\bibliographystyle{chicago}

\end{document}